\apptocmd{\sloppy}{\hbadness 10000\relax}{}{}
\newtheorem{theorem}{Theorem}
\newtheorem{corollary}[theorem]{Corollary}
\newtheorem{lemma}[theorem]{Lemma}
\newtheorem{proposition}[theorem]{Proposition}
\theoremstyle{definition}
\newtheorem{definition}[theorem]{Definition}
\theoremstyle{remark}
\newtheorem{remark}[theorem]{Remark}
\newtheorem{example}[theorem]{Example}
\newtheorem*{questions*}{Questions}
\newcommand{\NN}{\mathbb{N}}
\newcommand{\RR}{\mathbb{R}}
\newcommand{\CC}{\mathbb{C}}
\newcommand{\cont}{\mathcal{C}}
\newcommand{\hol}{\mathcal{O}}
\newcommand{\id}{\mathop{\mathrm{id}}}
\newcommand{\aut}{\mathop{\mathrm{Aut}}}
\newcommand{\slgrp}{{\mathrm{SL}}}
\newcommand{\slalg}{{\mathfrak{sl}}}
\author{Rafael B. Andrist}
\title[Integrable generators of Lie algebras]{Integrable generators of Lie algebras of vector fields on $\mathrm{SL}_2(\mathbb{C})$ and on $xy = z^2$}
\address{Rafael B. Andrist \\ Department of Mathematics \\
American University of Beirut \\
Beirut, Lebanon}
\address{Current Address: Faculty of Mathematics and Physics \\
University of Ljubljana \\
Ljubljana, Slovenia}
\thanks{The author was partially supported by the University Research Board (grant number: 104107) at the American University of Beirut (AUB) and by grant N1-0237 from ARRS, Republic of Slovenia}
\begin{document}

\begin{abstract}
For the special linear group $\mathrm{SL}_2(\mathbb{C})$ and for the singular quadratic Danielewski surface $x y = z^2$ we give explicitly a finite number of complete polynomial vector fields that generate the Lie algebra of all polynomial vector fields on them. Moreover, we give three unipotent one-parameter subgroups that generate a subgroup of algebraic automorphisms acting infinitely transitively on $x y = z^2$.
\end{abstract}

\keywords{density property, finitely generated Lie algebra, completely integrable vector fields, Andersen--Lempert theory, infinitely transitive}

\subjclass{32M17, 32E30, 32M25, 32Q56, 14R10}

\maketitle

\section{Introduction}

Several notions have been introduced to quantify that the automorphism group of Stein/affine manifold is ``large''. Around 2000, Varolin coined the notion of the density property (see Section \ref{secbackground} for details). One of the implications of the density property is the existence of finitely many complete holomorphic vector fields that span the tangent space $T_x X$ in every point $x \in X$, which implies transitivity of the group action. In fact, one can show that the group of holomorphic automorphisms acts $m$-transitively for any $m \in \NN$ which is called \emph{infinite transitivity}. The notion of infinite transitivity was introduced in 1999 by Kaliman and Zaidenberg \cite{MR1669174} in the algebraic category. We extend the definition to singular spaces:

\begin{definition}
Let $X$ be a complex variety and let $G$ be group acting on $X$ through (algebraic or holomorphic) automorphisms, then we call the action of $G$ \emph{infinitely transitive} if $G$ acts on the regular part $X_{\mathrm{reg}}$ $m$-transitively for any $m \in \NN$.
\end{definition}

In the algebraic category, the situation is slightly different; in particular, flows of complete algebraic vector fields need not be algebraic. Hence, the notion of the density property does not help in the study of algebraic automorphisms. Arzhantsev et al.\ introduced the notion of flexibility in 2013:
\begin{definition}{\cite{AFKKZ}}
A regular point $x \in X_{\mathrm{reg}}$ is called \emph{flexible} if the tangent space $T_x X$ is spanned by the tangent vectors to the orbits $Hx$ of one-parameter unipotent subgroups $H \subseteq \mathop{\mathrm{Aut}}(X)$.
A complex variety $X$ is called \emph{flexible} if every regular point $x \in X_{\mathrm{reg}}$ is flexible.
\end{definition}
One of their main results \cite{AFKKZ}*{Theorem 0.1} implies that the group of algebraic automorphisms of a flexible variety acts infinitely transitively.

The question whether one can find finitely many one-parameter unipotent subgroups that generate a subgroup of automorphisms acting infinitely transitively was first studied by Arzhantsev, Kuyumzhiyan and Zaidenberg \cite{AKZ2019} for toric varieties. For $\CC^n, n \geq 2,$ they showed that three one-parameter unipotent subgroups are sufficient. This was shown independently by the author using a different approach \cite{Andrist2019}; in the context of the volume density property, it seems natural instead to find finitely many complete vector fields with algebraic flows that generate the Lie algebra of all volume preserving algebraic vector fields. Similarly, one can consider the question of finding finitely many complete vector fields that generate the Lie algebra of all algebraic vector fields. In this case, the flows won't necessarily be algebraic anymore.

In this article, we focus on the question of generating the Lie algebra of all polynomial vector fields on the affine varieties $\slgrp_2(\CC)$ and $x y = z^2$. The corresponding one-parameter subgroups then generate a group that acts infinitely transitively. In Section \ref{secbackground} we provide the necessary tools and the theoretical background.

In Section \ref{secSL2} we consider the special linear group $\slgrp_2(\CC)$. Theorem \ref{thmSL2} shows there are four explicitly given vector fields that generate the Lie algebra of all polynomial vector fields on $\slgrp_2(\CC)$.

In Section \ref{secquadraticunipotent} we consider the singular quadratic Danielewski surface $\{(x,y,z) \in \CC^3 \,:\, x y = z^2\}$ which is a normal surface and also a toric variety; it has one isolated singularity in the origin. Theorem \ref{thmsingular} shows that there are four explicitly given vector fields that generate a certain Lie sub-algebra which is smaller than the Lie algebra of polynomial vector fields, but its flows nonetheless approximate holomorphic automorphisms that are isotopic to the identity, and can be used to achieve infinite transitivity.

In Section \ref{secquadraticunipotent} we revisit the same surface again, this time focusing on algebraic automorphisms, but no longer on generating the whole polynomial Lie algebra of vector fields. Our motivation is that \cite{AKZ2019}*{Theorem 5.20} only applies to toric varieties that are smooth in codimension $2$, which does not cover the case of the quadric $x \cdot y = z^2$. In Theorem \ref{thmalgsingapprox} we give three unipotent one-parameter subgroups that generate a subgroup of the algebraic automorphisms acting infinitely transitively on $x \cdot y = z^2$.

As a side note, we remark that the latter proof involves some tools from analysis such as the implicit function theorem to prove a purely algebraic result.

\begin{questions*}
Can we find finitely many complete polynomial vector fields that generate the Lie algebra of all polynomial vector fields on
\begin{enumerate}
\item\label{q1} smooth Danielewski surfaces $\{ (x,y,z) \in \CC^3 \,:\, x y = p(z) \}$ where $p \colon \CC \to \CC$ is a polynomial with simple roots,
\item\label{q2} singular quadrics $\{ (z_1, z_2, \dots, z_n) \in \CC^n \,:\, z_1^2 + z_2^2 + \dots + z_n^2 = 0 \}$, and
\item other simple Lie groups than $\slgrp_2(\CC)$?
\end{enumerate}
\end{questions*}
Note that the defining equation in Question (\ref{q2})  is equivalent to $x y = z^2$ by a linear change of coordinates if $n=3$.%; For $n \geq 5$ it is no longer a toric variety.

\clearpage

\section{Background and Tools}
\label{secbackground}

\begin{definition}
Let $X$ be a complex variety and let $V$ be a holomorphic vector field on $X$. We call $V$ \emph{complete} or \emph{$\CC$-complete} if its flow map exists for all times $t \in \CC$. We call $V$ \emph{$\RR$-complete} if its flow map exists for all times $t \in \RR$. 
\end{definition}
Since the flow satisfies the semi-group property, any time-$t$ map of a $\RR$- or $\CC$-complete vector field is a holomorphic automorphism.

\smallskip

The density property for complex manifolds was introduced and studied by Varolin in \cites{MR1785520,MR1829353} around 2000:

\begin{definition} [\cite{MR1829353}] \hfill
\begin{enumerate}
\item Let $X$ be a Stein manifold. We say that $X$ has the \emph{density property} if the Lie algebra generated by the complete holomorphic vector fields on $X$ is dense (in the compact-open topology) is the Lie algebra of all holomorphic vector fields on $X$.
\item Let $X$ be an affine manifold. We say that $X$ has the \emph{algebraic density property}, if the Lie algebra generated by the complete algebraic vector fields on $X$ coincides with the Lie algebra of all algebraic vector fields on $X$.
\end{enumerate}
\end{definition}

By a standard application of Cartan's Theorem B and Cartan--Serre's Theorem A, the algebraic density property implies the density property (see e.g.\ \cite{MR3320241}*{Proposition 6.2} which also covers the singular case). Since flows of algebraic vector fields don't need to be algebraic, there is no direct advantage in proving the algebraic density property over the density property. But polynomial vector fields are much more amenable to algebraic manipulations, and thus the algebraic density property is usually easier to prove directly and can be used a useful tool for establishing the density property.

\begin{example}
Examples of Stein manifolds with the density property include $\CC^n,  n \geq 2$ which are a special case of affine homogeneous spaces of linear algebraic groups. The connected components of these homogeneous spaces enjoy the density property except for $\CC$ and $(\CC^\ast)^n$ \cites{MR2718937, MR3623226}. Whether or not $(\CC^\ast)^n$ has the density property is not known.  
Other classes of affine manifolds with the density property include smooth Danielewski surfaces $\{ (x,y,z) \in \CC^3 \,:\, xy = p(z) \}$ where $p$ is a polynomial with simple zeroes \cite{MR2350038}. Moreover, the Koras--Russel cubic threefold \cite{MR3513546}, Calogero--Moser spaces \cite{MR4305975} and a large class of Gizatullin surfaces \cites{MR3833804, MR3717940} also enjoy the density property. For details and a comprehensive list we refer the reader to the recent survey by Forstneri\v{c} and Kutzschebauch \cite{MR4440754}.
\end{example}

Let $X$ be a complex manifold of complex dimension $n$. We call a complex differential form of bi-degree $(n,0)$ on $X$ a \emph{volume form} if it is nowhere degenerate.

Let $X$ be a complex variety. We denote its \emph{group of holomorphic automorphisms} by $\aut(X)$. If $X$ is smooth and if there exists a volume form $\omega$ on $X$, we denote the \emph{group of $\omega$-preserving holomorphic automorphisms} by $\aut_{\omega}(X)$.

\pagebreak

\begin{definition}\hfill
\begin{enumerate}
\item Let $X$ be a Stein manifold with a holomorphic volume form $\omega$. We say that $(X, \omega)$ has the \emph{volume density property} if the Lie algebra generated by the complete $\omega$-preserving holomorphic vector fields on $X$ is dense (in the compact-open topology) in the Lie algebra of all $\omega$-preserving holomorphic vector fields on $X$. \cite{MR1829353}
\item Let $X$ be an affine manifold with an algebraic volume form $\omega$. We say that $(X, \omega)$ has the \emph{algebraic volume density property} if the Lie algebra generated by the complete $\omega$-preserving algebraic vector fields on $X$ coincides with the Lie algebra of all $\omega$-preserving algebraic vector fields on $X$. \cite{MR2660454}
\end{enumerate}
\end{definition}

Again, the algebraic volume density property implies the volume density property; however, the proof is not straightforward and can be found in \cite{MR2660454} by Kaliman and Kutzschebauch.

The main result for manifolds with density property is the following theorem which was first stated for star-shaped domains of $\CC^n$ by Anders\'en and Lempert in 1992, then generalized to Runge domains by Forstneri\v{c} and Rosay in 1993 and finally extended to manifolds with the density property by Varolin:
\begin{theorem}\cites{MR1185588, MR1213106, MR1296357, MR1829353}
\label{thmAL}
Let $X$ be a Stein manifold with the density property or $(X,\omega)$ be a Stein manifold with the volume density property, respectively. Let $\Omega \subseteq X$ be an open subset and $\varphi \colon [0,1] \times \Omega \to X$ be a $\cont^1$-smooth map such that
\begin{enumerate}
\item $\varphi_0 \colon \Omega \to X$ is the natural embedding,
\item $\varphi_t \colon \Omega \to X$ is holomorphic and injective for every $t \in [0,1]$ and, respectively, $\omega$-preserving, and
\item $\varphi_t(\Omega)$ is a Runge subset of $X$ for every $t \in [0,1]$.
\end{enumerate}
Then for every $\varepsilon > 0$ and for every compact $K \subset \Omega$ there exists a continuous family $\Phi \colon [0, 1] \to \aut(X)$ or (respectively) $\Phi \colon [0, 1] \to \aut_\omega(X)$, such that $\Phi_0 = \id_X$ and $\| \varphi_t - \Phi_t \|_K < \varepsilon$ for every $t \in [0,1]$.

\smallskip
Moreover, these each of the automorphisms $\Phi_t$ can be chosen to be compositions of flows of generators of a dense Lie subalgebra in the Lie algebra of all holomorphic vector fields on $X$.
\end{theorem}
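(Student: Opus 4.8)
The plan is to use the cocycle property of flows to reduce the statement to a small‑time approximation and then invoke the density property.
\emph{Step 1 (localization in time).} I would pick a partition $0 = t_0 < t_1 < \cdots < t_N = 1$ of small mesh and, on the $j$‑th subinterval, factor $\varphi_t = (\varphi_t \circ \varphi_{t_{j-1}}^{-1}) \circ \varphi_{t_{j-1}}$. Since $\varphi$ is $\cont^1$ in $t$, the increment $\psi_{j,t} := \varphi_t \circ \varphi_{t_{j-1}}^{-1}$ is a holomorphic injection defined near the compact set $K_{j-1} := \varphi_{t_{j-1}}(K)$ that is $\cont^0$‑close to the inclusion, uniformly in $t$ over the subinterval, once the mesh is fine. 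If each $\psi_{j,t}$ can be approximated on $K_{j-1}$ by an automorphism $\Theta_{j,t}$ depending continuously on $t$ with $\Theta_{j,t_{j-1}} = \id_X$, then $\Phi_t := \Theta_{j,t} \circ \Theta_{j-1,t_{j-1}} \circ \cdots \circ \Theta_{1,t_1}$ on the $j$‑th subinterval is the desired family, provided the tolerances and the compacts are chosen so that the finitely many compositions keep the total error below $\varepsilon$; continuity of $\Phi$ and $\Phi_0 = \id_X$ are then automatic.

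\emph{Step 2 (the infinitesimal step, using the density property).} Since each $\varphi_t$ is an injective holomorphic map between manifolds of equal dimension, it is a biholomorphism onto the open set $\Omega_t := \varphi_t(\Omega)$, so $v_t := \dot\varphi_t \circ \varphi_t^{-1}$ is a well‑defined time‑dependent holomorphic vector field on $\Omega_t$ and $\varphi_t$ is its flow starting from $\id_\Omega$; in the $\omega$‑preserving case $v_t$ is divergence‑free. Because $\Omega_{t_{j-1}}$ is Runge in $X$ by hypothesis~(3), Runge approximation for sections of $TX$ — in the volume case, its subtler divergence‑free version — lets me approximate $v_{t_{j-1}}$ on a compact neighborhood of $K_{j-1}$ by a global holomorphic vector field on $X$, and then the density property (resp. volume density property) replaces the latter by a vector field $W_j$ in the Lie algebra $\mathfrak g$ generated by the complete (resp. complete $\omega$‑preserving) fields. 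A Gronwall estimate then shows that for $s$ in the short subinterval the time‑$(s - t_{j-1})$ flow of $W_j$ is $\cont^0$‑close to $\psi_{j,s}$ on $K_{j-1}$, since both start on $K_{j-1}$, their generators agree closely on a fixed neighborhood, and the time is short enough that the orbits do not leave that neighborhood.

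\emph{Step 3 (from Lie combinations to compositions of flows).} The field $W_j \in \mathfrak g$ need not be complete, but its flow is, uniformly on compacts for small time, a limit of compositions of flows of the complete generators: a bracket is handled by the group‑commutator asymptotics $\phi^{[U,V]}_{t} = \lim_{n\to\infty}\bigl(\phi^V_{-\sqrt{t/n}}\circ\phi^U_{-\sqrt{t/n}}\circ\phi^V_{\sqrt{t/n}}\circ\phi^U_{\sqrt{t/n}}\bigr)^{n}$ and a sum by $\phi^{U+V}_t = \lim_{n\to\infty}(\phi^U_{t/n}\circ\phi^V_{t/n})^n$, and these compose inductively along the expression for $W_j$; in the volume case every generator and every bracket of generators is again $\omega$‑preserving, so all approximants lie in $\aut_\omega(X)$. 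Taking $\Theta_{j,s}$ to be such a composition (continuous in $s$, equal to $\id_X$ at $s=t_{j-1}$) closes the single step of Step~1, and simultaneously yields the final ``Moreover'': each $\Phi_t$ is a composition of flows of generators of $\mathfrak g$.

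\emph{Main obstacle.} The genuine work is not the flow algebra of Step~3 but: (i) verifying that hypotheses (1)--(3) — especially the Runge condition, which must be seen to persist through the approximation — already suffice, this being the refinement of Forstneri\v{c} and Rosay over the original ambient‑flow hypothesis of Anders\'en and Lempert; and (ii) the uniform control in $t$: all the approximations (Runge approximation, the density property, Gronwall, the commutator limits) must be carried out with errors small enough and on compacts large enough that, after the finitely many compositions over the partition, the outcome is still within $\varepsilon$ of $\varphi_t$ on $K$ for \emph{every} $t\in[0,1]$, with $\Phi$ continuous and $\Phi_0 = \id_X$. Making these nested choices of neighborhoods, time steps and tolerances mutually consistent is the delicate part; each individual estimate is standard.
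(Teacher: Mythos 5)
The paper gives no proof of Theorem \ref{thmAL}: it is quoted from Anders\'en--Lempert, Forstneri\v{c}--Rosay and Varolin (cf.\ also Forstneri\v{c}'s book), and the proof in those references follows exactly the scheme you describe. Your outline --- partition of $[0,1]$, the time-dependent field $v_t=\dot\varphi_t\circ\varphi_t^{-1}$ on the Runge domains $\varphi_t(\Omega)$, Runge approximation by global (divergence-free) fields, the (volume) density property, and then approximating flows of Lie combinations by compositions of flows of the complete generators (which is precisely Proposition \ref{propflowapprox}, invoked rather than reproved in the paper) --- is essentially the same argument, so there is nothing genuinely different to compare.
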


One of the two main ingredients in the proof of Theorem \ref{thmAL} is the following proposition which has been found by Varolin \cite{MR1829353}, but is stated best as a stand-alone result in the textbook of Forstneric \cite{Forstneric-book}.

\begin{proposition}\cite{Forstneric-book}*{Corollary 4.8.4}
\label{propflowapprox}
Let $V_1, \dots, V_m$ be $\mathbb{R}$-complete holomorphic vector fields on a complex manifold $X$. Denote by $\mathfrak{g}$ the Lie subalgebra generated by the vector fields $\{V_1, \dots, V_m\}$ and let $V \in \mathfrak{g}$. Assume that $K$ is a compact set in $X$ and $t_0 > 0$ is such that the flow $\varphi_t(x)$ of $V$ exists for every $x \in K$ and for all $t \in [0, t_0]$. Then $\varphi_{t_0}$ is a uniform limit on $K$ of a sequence of compositions of time-forward maps of the vector fields $V_1, \dots, V_m$.
\end{proposition}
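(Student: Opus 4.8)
The plan is to combine the classical Lie product and commutator formulas for flows of vector fields with an induction on the length of iterated Lie brackets.

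First I would record the two flow identities. For holomorphic vector fields $W',W''$ on $X$, wherever the flow on the left is defined, one has the Lie product formula
\[
\varphi^{W'+W''}_{t} \;=\; \lim_{n\to\infty}\bigl(\varphi^{W'}_{t/n}\circ\varphi^{W''}_{t/n}\bigr)^{n}
\]
and, with $s_{n}=\sqrt{t/n}$, the commutator formula
\[
\varphi^{[W',W'']}_{t} \;=\; \lim_{n\to\infty}\bigl(\varphi^{W''}_{-s_{n}}\circ\varphi^{W'}_{-s_{n}}\circ\varphi^{W''}_{s_{n}}\circ\varphi^{W'}_{s_{n}}\bigr)^{n},
\]
together with the trivial rescaling $\varphi^{cW}_{t}=\varphi^{W}_{ct}$ for $c\in\RR$. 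Each limit is uniform on compact subsets of the set where the limiting flow is defined: a Taylor expansion shows that one elementary composition agrees with the flow of the target field up to an error $O(n^{-2})$, resp.\ $O(s_{n}^{3})$, and, combined with a uniform Lipschitz bound for the flow maps on a slightly enlarged compact set, the $n$ accumulated errors still tend to $0$ (note $n\cdot s_{n}^{3}=t\,s_{n}\to 0$). The care needed here is domain control: one enlarges $K$ to a compact set on which $\varphi^{V}_{t}$ is still defined for $t\in[0,t_{0}]$, and checks that the partial compositions remain inside $X$ once $n$ is large, since they stay uniformly close to sub-arcs of $\varphi^{V}$-trajectories.

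Next I would reduce to a single bracket. Since $\mathfrak g$ is generated by $V_{1},\dots,V_{m}$, every $V\in\mathfrak g$ is a finite linear combination $V=\sum_{j}c_{j}W_{j}$ of iterated brackets $W_{j}$ of the $V_{i}$; one may take the $c_{j}$ real (in the relevant applications the $V_{i}$ are $\CC$-complete, hence so are $iV_{i}$, and the complex Lie algebra they generate equals the real Lie algebra generated by $V_{1},\dots,V_{m},iV_{1},\dots,iV_{m}$, so nothing is lost). By the Lie product formula, $\varphi^{V}_{t_{0}}$ is, uniformly on $K$, a limit of compositions of the short-time flows $\varphi^{W_{j}}_{c_{j}t_{0}/n}$, whose parameters $c_{j}t_{0}/n$ tend to $0$ — and such short-time flows exist on the relevant compact sets because holomorphic vector fields generate flows for short time, uniformly on compact sets. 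Hence it suffices to show: for every iterated bracket $W$ of $V_{1},\dots,V_{m}$, every compact set $L$, and every sufficiently small $s$ (of either sign), the map $\varphi^{W}_{s}|_{L}$ is a uniform limit of finite compositions of flow maps $\varphi^{V_{i}}_{\tau}$. Negative $\tau$ is harmless: $\varphi^{V_{i}}_{-|\tau|}$ is the forward-time map of the $\RR$-complete field $-V_{i}$.

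Finally I would prove this by induction on the bracket length of $W$. The base case $W=V_{i}$ is immediate. For the inductive step write $W=[W',W'']$ with $W',W''$ of smaller length. Given $\varepsilon>0$ and $L$, I would first fix $n$ so large that the $n$-th power of the elementary commutator loop at parameter $s_{n}=\sqrt{s/n}$ is within $\varepsilon/2$ of $\varphi^{W}_{s}$ on $L$ — legitimate by the first step and because $s_{n}\to 0$, so each loop stays near the identity and the intermediate images stay in controlled compact sets. Then, with $n$ now fixed, the inductive hypothesis applies to each of the $4n$ inner factors $\varphi^{W'}_{\pm s_{n}},\varphi^{W''}_{\pm s_{n}}$ on the compact sets where they occur, and replacing each by a sufficiently close composition of $\varphi^{V_{i}}_{\tau}$'s changes the total composition by less than $\varepsilon/2$ on $L$, by uniform continuity of composition of finitely many maps on compact sets. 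The hard part will be this last step: the diagonal interleaving of the commutator limit with the inductive approximations, and the attendant bookkeeping of nested compact domains and the order of quantifiers, is the only genuinely delicate point; the Taylor estimates of the first step and the Lie-algebraic reduction of the second are routine.
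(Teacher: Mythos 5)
Your outline is, in substance, the proof of the cited result: the paper itself gives no argument for Proposition \ref{propflowapprox} but quotes it from Forstneri\v{c}'s book (Corollary 4.8.4), where it is derived exactly along your lines — the Lie product formula for sums, the commutator loop with step $\sqrt{t/n}$ for brackets, rescaling for real scalars, and induction on bracket length with the usual bookkeeping of compact domains. So the proposal is correct in outline and takes essentially the same route as the source the paper relies on; the error estimates ($O(n^{-2})$ per Euler step, $O(s_n^3)$ per commutator loop, hence totals $O(n^{-1})$ and $O(s_n)$) are the right ones.

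One point deserves more care than the word ``harmless'': the negative-time factors. Replacing $\varphi^{V_i}_{-|\tau|}$ by the forward map of $-V_i$ enlarges the generating set, and this is not cosmetic: with the literal conclusion — compositions of time-forward maps of $V_1,\dots,V_m$ only — the claim can fail. On $\CC^2$ take $V_1=\partial/\partial x$ and $V_2=x\,\partial/\partial y$, so $V=[V_1,V_2]=\partial/\partial y\in\mathfrak g$; every composition of forward-time maps of $V_1,V_2$ is of the form $(x,y)\mapsto(x+T,\,y+Sx+C)$ with $T,S\geq 0$ and $0\leq C\leq ST$, and such maps cannot approximate $(x,y)\mapsto(x,y+1)$ on the closed unit polydisc. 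So what your argument actually proves — and what the paper needs in the proof of Proposition \ref{propmainsing}, where the approximating maps are drawn from the group generated by the flows of $\Xi,\Theta,x\Xi$, hence include the inverse (backward) flow maps — is approximation by compositions of flow maps $\varphi^{V_i}_{\tau}$ with $\tau$ of either sign; the statement should be read in that sense. The same remark applies to your handling of complex scalars: passing to $iV_i$ is legitimate in the paper's applications because those generators are $\CC$-complete (their flows exist for complex time), but it likewise goes beyond the literal wording, so it is worth stating explicitly that you prove the version with flow maps at arbitrary (real, resp.\ complex) times, which is the version actually used.
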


For the proof of Theorem \ref{thmalgsingapprox} where we can't make use of the density property, we will need to use Proposition \ref{propflowapprox} directly.

\bigskip

As one of many standard applications of Theorem \ref{thmAL} we obtain the following. It is implicit in the paper of Varolin \cite{MR1785520}, but can also be found with a detailed proof in \cite{Andrist2019}*{Lemma 7 and Corollary 8}.

\begin{proposition}
\label{propinftrans}
Let $X$ be a Stein manifold with the density property resp.\ $(X,\omega)$ be a Stein manifold with the volume density property with $\dim_\CC X \geq 2$. Let $\mathfrak{g}$ be a Lie algebra that is dense in the Lie algebra of all holomorphic vector fields on $X$ resp.\ in the Lie algebra of all $\omega$-preserving holomorphic vector fields on $X$. Then the group of holomorphic automorphisms generated by the flows of completely integrable generators of $\mathfrak{g}$ acts infinitely transitively on $X$.
\end{proposition}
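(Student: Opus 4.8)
The plan is to show that for each fixed $m \in \NN$ the group $G$ generated by the flows of the completely integrable generators of $\mathfrak g$ acts $m$-transitively on $X$ — which is all of $X_{\mathrm{reg}}$, since $X$ is a manifold — so that, $m$ being arbitrary, $G$ acts infinitely transitively. We may assume $X$ connected. Consider the configuration space $X^{(m)} := \{(x_1,\dots,x_m) \in X^m : x_i \neq x_j \text{ for } i \neq j\}$; it is connected, because $X$ is and the diagonals $\{x_i = x_j\}$ are closed submanifolds of real codimension $2\dim_\CC X \geq 4$. Since $G$ acts diagonally on $X^{(m)}$ and its orbits partition this connected space, it suffices to prove that every $G$-orbit is open: the complement of an orbit is then a union of other (open) orbits, so the orbit is clopen, and connectedness forces a single orbit.

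Fix $(p_1,\dots,p_m) \in X^{(m)}$ and a small $r>0$ such that the closed ball $B := \overline{B}((p_1,\dots,p_m),r)$ lies inside a holomorphic chart of $X^{(m)}$ (a product of charts of $X$), and such that for every $(q_1,\dots,q_m) \in B$ there is a $\cont^1$ isotopy $\varphi^{(q_i)}_t$, depending continuously on $(q_i)$, defined on a fixed neighbourhood $\Omega$ of $\{p_1,\dots,p_m\}$ — a disjoint union of small balls, taken in the $\omega$-preserving case in local coordinates normalising $\omega$, and small enough that $\varphi^{(q_i)}_t(\Omega)$ is Runge for all $t$ — with $\varphi^{(q_i)}_0 = \id$, each $\varphi^{(q_i)}_t$ holomorphic, injective, and ($\omega$-preserving in that case), and $\varphi^{(q_i)}_1(p_j) = q_j$ for all $j$. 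By a parametric version of Theorem \ref{thmAL} (obtained by inspecting its proof, the construction being continuous in the isotopy data), for any $\varepsilon>0$ there is a continuous family $B \ni (q_i) \mapsto \Psi_{(q_i)} \in \aut(X)$ (resp.\ $\aut_\omega(X)$) with $\|\varphi^{(q_i)}_1 - \Psi_{(q_i)}\|_K < \varepsilon$ on a compact $K \subset \Omega$ containing the $p_j$; moreover, by the last assertion of Theorem \ref{thmAL}, each $\Psi_{(q_i)}$ can be taken to be a composition of flows of generators of a dense Lie subalgebra, which here we choose to be $\mathfrak g$ itself, generated by its completely integrable generators, so that $\Psi_{(q_i)} \in G$.

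The map $E \colon B \to X^{(m)}$, $E(q_1,\dots,q_m) := \big(\Psi_{(q_i)}(p_1), \dots, \Psi_{(q_i)}(p_m)\big)$, is then continuous, takes values in the $G$-orbit of $(p_1,\dots,p_m)$, and in the chart coordinates $\overline{B}(0,r) \subset \RR^{N}$, $N = 2m\dim_\CC X$, satisfies $\|E(x) - x\| < \varepsilon$ uniformly on $\overline{B}(0,r)$. Choosing $\varepsilon < r$, an elementary Brouwer fixed-point argument applies: for each $y_0$ in the concentric ball of radius $r-\varepsilon$, the continuous self-map $x \mapsto x - E(x) + y_0$ of $\overline{B}(0,r)$ has a fixed point $x^\ast$, whence $E(x^\ast) = y_0$. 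Thus $E$ surjects onto that smaller ball, every point of which therefore lies in the $G$-orbit of $(p_1,\dots,p_m)$; this orbit contains a neighbourhood of $(p_1,\dots,p_m)$, and since the base point was arbitrary, all $G$-orbits in $X^{(m)}$ are open. With the first paragraph, $G$ acts $m$-transitively, hence infinitely transitively, on $X = X_{\mathrm{reg}}$.

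The step I expect to be the genuine obstacle is the parametric bookkeeping in the Andersen--Lempert construction: Theorem \ref{thmAL} as stated produces only an $\varepsilon$-approximation for a single isotopy, and turning this into exact $m$-transitivity via the Brouwer argument requires the approximating automorphisms to depend continuously on the target tuple $(q_i)$ (and, in the volume case, to be genuinely $\omega$-preserving, which is why one passes to $\omega$-normal coordinates for the local isotopy). One may instead bypass the parametric statement by realising a spanning set of moving directions at $(p_1,\dots,p_m)$ through honest one-parameter families in $G$ — flows of the completely integrable generators of $\mathfrak g$ together with their iterated commutators, which again lie in $G$ — and invoking the inverse function theorem; this is essentially the argument of \cite{Andrist2019}*{Lemma 7 and Corollary 8} (cf.\ \cite{MR1785520}).
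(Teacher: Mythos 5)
Your reduction to openness of the diagonal $G$-orbits in the configuration space $X^{(m)}$ (connected because $\dim_\CC X\ge 2$) is fine, but the main route then has a genuine gap exactly at the step you flag: the ``parametric version'' of Theorem \ref{thmAL}. The paper only provides the non-parametric statement, and continuity of the approximating automorphism in the target tuple does not follow by merely ``inspecting the proof'': the automorphisms produced there, and by Proposition \ref{propflowapprox}, are finite compositions of time-forward maps of the generators, and the discrete choices involved --- how finely the isotopy is subdivided, how many factors the composition has, which generators and which Lie combinations approximate the field at each step --- need not vary continuously with $(q_1,\dots,q_m)$. Without continuity of $(q_i)\mapsto\Psi_{(q_i)}$ the map $E$ need not be continuous, and the Brouwer fixed-point step, which is the only thing converting $\varepsilon$-approximation into exact $m$-transitivity, collapses. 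So as written the first argument is not a proof.

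The fallback you sketch at the end is the correct repair, and it is in fact the proof the paper relies on: Proposition \ref{propinftrans} is quoted from \cite{Andrist2019}*{Lemma 7 and Corollary 8} (cf.\ \cite{MR1785520}), and the same scheme is carried out in the paper's Proposition \ref{propmainsing}. There one fixes localized vector fields (vanishing near all but one of the points and equal to coordinate directions near the remaining one), approximates each of their flows by a \emph{fixed} finite composition of flows of the generators via Proposition \ref{propflowapprox}, and applies the implicit function theorem to the resulting map in the flow times; exactness comes from submersivity of that map at $0$ (preserved under sufficiently close approximation, e.g.\ by Cauchy estimates), and continuity in the parameters is automatic because the parameters are the times of honest flows, not approximation data. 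One correction to your sketch of this fallback: flows of iterated commutators of the generators do \emph{not} in general lie in $G$ --- commutators of complete fields need not be complete, and even when they are, their time-$t$ maps are only locally uniform limits of elements of $G$ by Proposition \ref{propflowapprox}. What the argument needs, and uses, is precisely such approximating compositions, which do lie in $G$ and can be chosen close enough that submersivity survives.
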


\bigskip

The following lemma can be found in the proof of \cite{MR2385667}*{Corollary 2.2} by Kaliman and Kutzschebauch. Its proof is a straightforward calculation.

\begin{lemma}[Kaliman--Kutzschebauch formula]
Let $\Gamma$ and $\Delta$ be holomorphic vector fields and $f,g,h$ be holomorphic functions on a complex space. Then the following holds:
\begin{equation}
\label{KKformula}
\tag{KK}
[h \cdot f \cdot \Gamma, \; g \cdot \Delta] - [f \cdot \Gamma, \; h \cdot g \cdot \Delta]
=  - g f \Delta(h) \cdot \Gamma - f g \Gamma(h) \cdot \Delta 
\end{equation}
\end{lemma}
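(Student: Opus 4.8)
The plan is to reduce the claimed identity to the single classical formula
\[
[a\,V,\; b\,W] \;=\; ab\,[V,W] \;+\; a\,V(b)\,W \;-\; b\,W(a)\,V ,
\]
valid for holomorphic functions $a,b$ and holomorphic vector fields $V,W$ on a complex space; this is immediate from the fact that a holomorphic vector field acts as a derivation of $\hol$, so that the Leibniz rule applies. Everything beyond this point is bookkeeping.

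First I would apply the formula above to the first bracket on the left-hand side of \eqref{KKformula}, taking $a = hf$, $b = g$, $V = \Gamma$, $W = \Delta$, and then expand the resulting coefficient $\Delta(hf) = h\,\Delta(f) + f\,\Delta(h)$; this presents $[hf\Gamma,\,g\Delta]$ as a sum of four terms, one of which is $hfg\,[\Gamma,\Delta]$. I would then do the same for the second bracket, taking $a = f$, $b = hg$, $V = \Gamma$, $W = \Delta$, and expand $\Gamma(hg) = h\,\Gamma(g) + g\,\Gamma(h)$; this likewise presents $[f\Gamma,\,hg\Delta]$ as a sum of four terms, again including $hfg\,[\Gamma,\Delta]$.

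Subtracting the two four-term expansions, the two copies of $hfg\,[\Gamma,\Delta]$ cancel, as do the two terms proportional to $\Gamma(g)\,\Delta$ and the two proportional to $\Delta(f)\,\Gamma$; what remains is exactly $-\,gf\,\Delta(h)\,\Gamma \;-\; fg\,\Gamma(h)\,\Delta$, which is the asserted right-hand side. There is no genuine obstacle here: the only point requiring care is keeping track of the signs and applying the Leibniz rule to the correct factor in each term, which is precisely why this is a straightforward calculation. Since the whole argument is local and $\hol$-linear in the data, it holds verbatim on an arbitrary complex space.
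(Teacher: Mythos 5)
Your computation is correct, and it is exactly the ``straightforward calculation'' the paper alludes to (the paper itself gives no details, only citing Kaliman--Kutzschebauch): expanding both brackets via the Leibniz rule $[aV,bW]=ab[V,W]+aV(b)W-bW(a)V$ and cancelling the common terms yields precisely the right-hand side of \eqref{KKformula}. Nothing further is needed, since that Leibniz identity holds for derivations of the structure sheaf on any complex space.
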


The power of this formula lies in the observation that if all the vector fields on the l.h.s.\ are in a certain Lie algebra and if in addition $\Gamma(h) = 0$, then we found a new vector field on the r.h.s.\ that is a multiple of $\Gamma$ and lies the same Lie algebra.

\bigskip

The notion of the density property was extended to singular varities by Kutzschebauch, Liendo and Leuenberger \cite{MR3320241}. Following them, we introduce these notations:
Let $X$ be a normal reduced Stein space and let $X_{\mathrm{sing}}$ be its singular locus. Let $A \subseteq X$ be a closed analytic subvariety containing $X_{\mathrm{sing}}$ and let $I_A \subseteq \hol(X)$ be the vanishing ideal of $A$.
Let $\mathrm{VF}_{\mathrm{hol}}(X,A)$ be the $\hol(X)$-module of vector fields vanishing in $A$. Let $\mathrm{Lie}_{\mathrm{hol}}(X,A)$ be the Lie algebra generated by all the complete vector fields in $\mathrm{VF}_{\mathrm{hol}}(X,A)$.

\begin{definition}\cite{MR3320241}*{Definition 6.1}
Let $X$ be a normal reduced Stein space and let $A \subset X$ be a closed subvariety. We say that $X$ has the \emph{(strong) density property relative to $A$} if the Lie algebra generated by the complete holomorphic vector fields vanishing in $A$ is dense in the Lie algebra of all holomorphic vector fields vanishing on $A$.
Furthermore, we say that $X$ has the \emph{weak density property relative to $A$} if there exists $\ell \geq 0$ such that $\mathcal{I}_A^\ell \cdot \mathrm{VF}_{\mathrm{hol}}(X, A) \subseteq \overline{\mathrm{Lie}_{\mathrm{hol}}(X, A)}$.
\end{definition}

\begin{theorem}\cite{MR3320241}*{Theorem 6.3}
Let $X$ be a normal reduced Stein space and let $A \subset X$ be a closed analytic subvariety that contains the singularity locus of $X$. 
Assume that $X$ has the weak relative density property with respect to $A$ for some $\ell \geq 0$.
Let $\Omega \subseteq X$ be an open subset and $\varphi \colon [0,1] \times \Omega \to X$ be a $\cont^1$-smooth map such that
\begin{enumerate}
\item $\varphi_0 \colon \Omega \to X$ is the natural embedding,
\item $\varphi_t \colon \Omega \to X$ is holomorphic and injective for every $t \in [0,1]$,
\item $\varphi_t(\Omega)$ is a Runge subset of $X$ for every $t \in [0,1]$, and
\item $\varphi_t$ fixes $A$ up to order $\ell$ where $\ell$ is such that \\
$\mathcal{I}_A^\ell \cdot \mathrm{VF}_{\mathrm{hol}}(X, A) \subseteq \overline{\mathrm{Lie}_{\mathrm{hol}}(X, A)}$.
\end{enumerate}
Then for every $\varepsilon > 0$ and for every compact $K \subset \Omega$ there exists a continuous family $\Phi \colon [0, 1] \to \aut(X)$, fixing $A$ pointwise, such that
$\Phi_0 = \id_X$ and $\| \varphi_t - \Phi_t \|_K < \varepsilon$ for every $t \in [0,1]$.

\smallskip
Moreover, these automorphisms can be chosen to be compositions of flows of generators of a dense Lie subalgebra in the Lie algebra of all holomorphic vector fields on $X$, see Varolin \cite{MR1829353}.
\end{theorem}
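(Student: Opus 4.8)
The plan is to run the Andersen--Lempert scheme of Theorem~\ref{thmAL} while keeping track of the order of vanishing along $A$ at every step. First I would discretize the isotopy: partition $[0,1]$ into short subintervals $[t_{j-1},t_j]$ and set $\psi_j := \varphi_{t_j}\circ\varphi_{t_{j-1}}^{-1}$, a holomorphic injection of a neighbourhood of $\varphi_{t_{j-1}}(K)$ into $X$ that is $\cont^1$-close to the inclusion. Differentiating the hypothesis that $\varphi_t$ fixes $A$ to order $\ell$, the time-dependent vector field $V_t := \dot\varphi_t\circ\varphi_t^{-1}$ on $\varphi_t(\Omega)$ lies in $\mathcal{I}_A^\ell\cdot\mathrm{VF}_{\mathrm{hol}}(\varphi_t(\Omega),A)$, and on each short subinterval $\psi_j$ agrees, up to a controllably small error, with the time-$(t_j-t_{j-1})$ map of the flow of $V_{t_{j-1}}$.

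Next I would globalize $V_t$. Since $X$ is a normal Stein space, the sheaf of holomorphic vector fields vanishing along $A$ to order $\ell$ is coherent, hence acyclic on $X$ by Cartan's Theorem~B, and its sections enjoy the Oka--Weil approximation property on Runge subsets; as $\varphi_t(\Omega)$ is Runge in $X$, I can approximate $V_t$ uniformly on $\varphi_t(K)$ by a global $\widetilde V_t\in\mathcal{I}_A^\ell\cdot\mathrm{VF}_{\mathrm{hol}}(X,A)$. The weak relative density property then yields $U_t\in\mathrm{Lie}_{\mathrm{hol}}(X,A)$ approximating $\widetilde V_t$ uniformly on compacts, i.e.\ a finite Lie combination of complete holomorphic vector fields vanishing on $A$. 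Applying Proposition~\ref{propflowapprox} to these complete generators, the time-$(t_j-t_{j-1})$ flow of $U_{t_{j-1}}$ is a uniform limit on the relevant compact set of compositions of time-forward flows of the generators; since each generator vanishes on $A$, every such flow --- and hence every composition --- fixes $A$ pointwise.

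Finally I would telescope: composing the approximants obtained for the successive subintervals produces $\Phi_1\in\aut(X)$ fixing $A$ pointwise with $\|\varphi_1-\Phi_1\|_K<\varepsilon$, and interpolating across the subintervals gives the family $\Phi_t$ with $\Phi_0=\id_X$. The accumulation of errors is controlled in the standard way, by taking the partition fine enough and the approximations in the two previous steps accurate enough at each stage, using the continuity, injectivity and Runge hypotheses uniformly in $t\in[0,1]$. The ``moreover'' clause is automatic, since by construction every $\Phi_t$ is a composition of flows of the chosen complete generators.

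The hard part is the order-$\ell$ bookkeeping in the globalization step: one must verify that the local fields $V_t$ genuinely lie in $\mathcal{I}_A^\ell\cdot\mathrm{VF}_{\mathrm{hol}}$, that the Runge/Cartan~B approximation can be performed inside this submodule rather than merely among all vector fields, and that every subsequent approximation respects the vanishing so that the resulting automorphisms fix $A$ pointwise. On singular $X$ one works with the tangent sheaf in place of a tangent bundle throughout, which is precisely where normality and Steinness of $X$ are used.
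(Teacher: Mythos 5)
This statement is quoted background rather than a result of the paper: the text offers no proof of its own, only the citation to \cite{MR3320241}*{Theorem 6.3}, so there is no internal argument to compare against. Your sketch follows the same standard relative Andersen--Lempert scheme used in that source (and in Theorem~\ref{thmAL}): discretize the isotopy, globalize the time-dependent field $V_t=\dot\varphi_t\circ\varphi_t^{-1}$ by Cartan~B/Oka--Weil approximation inside the coherent submodule $\mathcal{I}_A^\ell\cdot\mathrm{VF}_{\mathrm{hol}}(X,A)$ on the Runge sets $\varphi_t(\Omega)$, replace it by a Lie combination of complete fields vanishing on $A$ via the weak relative density property, and approximate the resulting flows by compositions of flows of the generators via Proposition~\ref{propflowapprox}; since all generators vanish on $A$, the automorphisms fix $A$ pointwise. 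The outline is correct; the only substantive points left open are exactly the ones you flag, namely checking that ``fixes $A$ up to order $\ell$'' genuinely places $V_t$ in the submodule $\mathcal{I}_A^\ell\cdot\mathrm{VF}_{\mathrm{hol}}(\cdot,A)$ rather than merely among fields whose coefficients vanish to order $\ell$, and that the flow-approximation step (stated in the paper for complex manifolds) is applied correctly on the singular space, e.g.\ by working on $X\setminus A$ and extending by the identity on $A$; both are carried out in \cite{MR3320241}.
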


\section{The Special Linear Group $\slgrp_2(\CC)$}
\label{secSL2}

Throughout this section, we will use the coordinates $a,b,c,d$ for $\slgrp_2(\CC)$ in the following way:
\[
\slgrp_2(\CC) = \left\{
\begin{pmatrix} a & b \\ c & d \end{pmatrix} \,:\,
 a,b,c,d \in \CC, \;  ad - bc = 1
 \right\}
\]

The vector fields corresponding to left-multiplication by $\begin{pmatrix} 1 & t \\ 0 & 1 \end{pmatrix}$ and $\begin{pmatrix} 1 & 0 \\ t & 1 \end{pmatrix}$, respectively:
\begin{align*}
V &= c \frac{\partial}{\partial a} + d \frac{\partial}{\partial b}\\
W &= a \frac{\partial}{\partial c} + b \frac{\partial}{\partial d}\\
H &= c \frac{\partial}{\partial c} + d \frac{\partial}{\partial d} - a \frac{\partial}{\partial a} - b \frac{\partial}{\partial b} = [V,W]
\end{align*}
We have the commutation relations $[H,V] = 2V$ and $[H,W] = -2W$.

Similar to the case of the singular quadratic Danielewski surface, we obtain the following:
\begin{align*}
[V, bW] &= d W + b H \\
[V, d W + b H] &= 2 d H - 2 b V \\
[V, 2 d H - 2 b V] &=  - 6 d V \\
[V, aW] &= c W + a H \\
[V, c W + a H] &= 2 c H - 2 a V \\
[V, 2 c H - 2 a V] &=  - 6 c V \\
[W, d V] &= b V - d H \\
[W, b V - d H] &= -2 b H - 2 d W \\
[W, -2 b H - 2 d W] &= -6 b W \\
[[V, b^k W], b W] &= -(k+3) b^{k+1} W
\end{align*}
%Different to the case of the Danielewski surface in Section \ref{secquadratic}, but more similar to the situation in $\CC^n$, there is no way to map $bW$ to $aW$ or vice versa. But the trick for $\CC^n$ in \cite{Andrist2019} does not work here, since $V$, $W$ and $H$ each preserve the polynomial degree, and we need to work with the degree induced by the LNDs $V$ and $W$.
Using the above, we find the following lemma.

\begin{lemma}
The Lie algebra generated by $V, W, (b+c)W, dW$ contains all linear combinations of $V, W, H$ with polynomial coefficients of degree one.
\end{lemma}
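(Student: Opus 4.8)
The plan is to reduce the statement to showing that the Lie algebra $\mathfrak{g}$ generated by $V, W, (b+c)W, dW$ contains each of the fifteen vector fields
\[
V,\ W,\ H,\quad aV,\ bV,\ cV,\ dV,\quad aW,\ bW,\ cW,\ dW,\quad aH,\ bH,\ cH,\ dH,
\]
since these span over $\CC$ the space of all $\alpha V + \beta W + \gamma H$ with $\alpha,\beta,\gamma\in\CC[a,b,c,d]$ of degree at most one. Every bracket I will use is an instance of the Leibniz rule $[P, fQ] = P(f)\,Q + f\,[P,Q]$ together with $H=[V,W]$, the relations $[V,H]=-2V$ and $[W,H]=2W$, and the elementary derivatives $V(a)=c$, $V(b)=d$, $V(c)=V(d)=0$, $W(a)=W(b)=0$, $W(c)=a$, $W(d)=b$ (hence $V(b+c)=d$ and $W(b+c)=a$).

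First, $H=[V,W]\in\mathfrak{g}$ settles the constant part. Applying $\mathrm{ad}_V$ twice to the generator $dW$ and using $V(d)=0$ yields $[V,dW]=dH$ and then $[V,dH]=-2\,dV$, so $dW,dH,dV\in\mathfrak{g}$. Applying $\mathrm{ad}_V$ to the generator $(b+c)W$ yields $[V,(b+c)W]=dW+(b+c)H$, whence $(b+c)H\in\mathfrak{g}$ because $dW$ is a generator; applying $\mathrm{ad}_V$ once more yields $[V,(b+c)H]=dH-2(b+c)V$, whence $(b+c)V\in\mathfrak{g}$ because $dH$ is already available. The decisive step is that the coordinate $a$ appears in none of the four generators and must therefore be manufactured: $[W,(b+c)V]=aV-(b+c)H\in\mathfrak{g}$, hence $aV\in\mathfrak{g}$. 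From here a short bootstrap closes the argument, each bracket subtracting off only terms already known to lie in $\mathfrak{g}$: $[W,aV]=-aH$ gives $aH$; $[W,aH]=2aW$ gives $aW$; $[V,aW]=cW+aH$ gives $cW$, so $bW=(b+c)W-cW$; $[V,cW]=cH$ gives $cH$, so $bH=(b+c)H-cH$; and $[V,cH]=-2\,cV$ gives $cV$, so $bV=(b+c)V-cV$. All fifteen vector fields are then in $\mathfrak{g}$.

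I do not expect a genuine obstacle: each displayed identity is a one-line Leibniz computation. What needs care is purely the bookkeeping — ordering the bootstrap so that every bracket is formed only after its lower-order correction terms ($dH$, $(b+c)H$, $aH$, $cH$, and so on) have been produced — together with the observation that $a$ enters $\mathfrak{g}$ only through the bracket $[W,(b+c)V]$. The chains $[V,bW]=dW+bH\to\cdots$ and $[W,dV]=bV-dH\to\cdots$ recorded above give an alternative route, but the version described here is the most economical.
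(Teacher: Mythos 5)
Your proof is correct --- each bracket checks against the derivation rules $V(a)=c$, $V(b)=d$, $V(c)=V(d)=0$, $W(c)=a$, $W(d)=b$, $W(a)=W(b)=0$ and $[V,H]=-2V$, $[W,H]=2W$ --- and it takes essentially the same approach as the paper: a direct Leibniz-rule bootstrap of iterated brackets of the four generators producing the fifteen spanning fields $V,W,H$ and $a V,\dots,dH$. The only difference is bookkeeping: the paper starts from $[W,(b+c)W]=aW$ and triple $\mathrm{ad}_V$-brackets to reach $cV,dV$, whereas you first manufacture $aV$ via $[W,(b+c)V]$; both are the same elementary computation.
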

\begin{proof}
\begin{align*}
[W, (b+c)W]          &= aW \\
[V,[V,[V,aW]]]       &= -6cV \\
[V,[V,[V, (b+c)W]    &= -6dV \\
[W,dW]               &= bW 
\end{align*}
From here, we now obtain also $cW = (b+c)W - bW$. Next,
\begin{align*}
[V, cW] &= cH \\
[W, cH] &= aH + 2cW
\end{align*}
yields $aH$ and $cH$.
We get the other terms in front of $H$ by proceeding symmetrically with $dW$:
\begin{align*}
[V, dW] &= d H \\
[W, dH] &= b H + 2 d W
\end{align*}
The missing term in front of $V$ then follow from:
\begin{align*}
[V, aH] &= c H - 2 a V \\
[V, bH] &= d H - 2 b V \qedhere
\end{align*}

\end{proof}

The following higher powers are easily obtained:
\begin{align*}
[[V, b^k W], b W] &= -(k+3) b^{k+1} W \\
[[V, a^k W], a W] &= -(k+3) a^{k+1} W \\
[[W, c^k V], c V] &= -(k+3) c^{k+1} V \\
[[W, d^k V], d V] &= -(k+3) d^{k+1} V
\end{align*}

Next we obtain $a^k H, b^k H, c^k H, d^k H$ using Equation \eqref{KKformula}:
\begin{align*}
[c W, c^k V]                  &= k a c^k V - c^{k+1} H  \\
[c W, c^k V] - [W, c^{k+1} V] &= -a c^k V 
\end{align*}
By taking linear combinations, we obtain $c^{k+1} H$. Analogously, we treat $a^{k+1} H, b^{k+1} H$ and $d^{k+1} H$.

Recall that 
\[
H = c \frac{\partial}{\partial c} + d \frac{\partial}{\partial d} - a \frac{\partial}{\partial a} - b \frac{\partial}{\partial b}
\]
It is now easy to see that Lie brackets of the form
\[
[ c^m H, a^k H] = (k + m) \cdot a^k c^m H
\]
will give us the following vector fields:
\[
a^k c^m H, \; a^k d^n H, \; b^\ell c^m H, \; b^\ell d^n H
\]
for any $k, \ell, m, n \in \NN_0$.
Next, 
\[
[a^k c^m H, b^\ell d^n H] = (-k+m-\ell+n) \cdot a^k b^\ell c^m d^n H
\]
yields all monomials in front of $H$ except those with $m+n = k+\ell$.  
The missing terms can be obtained e.g.\ using again Equation \eqref{KKformula}:
\[
[a \cdot a^k c^m H, b^\ell d^n H] - [a^k c^m H, a \cdot b^\ell d^n H] = 2 a^k b^\ell c^\ell d^n H \quad \text{ for } -\ell+n-1 \neq 0
\]
and
\[
[b \cdot a^k c^m H, b^\ell d^n H] - [a^k c^m H, b \cdot b^\ell d^n H] = 2 a^k b^\ell c^\ell d^n H \quad \text{ for } -k+m-1 \neq 0
\]
We can then transfer these powers to the terms with $V$ and $W$.
If both $-\ell+n-1 = 0$ and $-k+m-1 = 0$, then $m+n \neq k+\ell$, hence we obtained all monomial coefficients of $H$. We can transfer these terms now to $V$ and $W$ by
\begin{align*}
[V, a^k b^\ell c^\ell d^n H] = V(a^k b^\ell c^\ell d^n) H + 2 a^k b^\ell c^\ell d^n V
\end{align*}
and subtracting $V(a^k b^\ell c^\ell d^n) H$, since all such terms are known. Similarly, we proceed for $W$ and finally, we obtain all polynomial coeffients in front of $V$, $W$ and $H$. Since these three vector fields span $\slalg_2(\CC)$ as a vector space, the following theorem is now a standard application of Cartan--Serre's Theorem B.

\begin{theorem}
\label{thmSL2}
The four complete vector fields $V, W, (b+c)W, dW$ generate the Lie algebra of all polynomial vector fields on $\slgrp_2(\CC)$.
\end{theorem}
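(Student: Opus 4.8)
The goal is to show that the Lie algebra $\mathfrak{g}$ generated by the four fields $V$, $W$, $(b+c)W$, $dW$ coincides with the Lie algebra of all polynomial vector fields on $\slgrp_2(\CC)$. The plan proceeds in two stages: first prove that $\mathfrak{g}$ contains $f\cdot V$, $f\cdot W$ and $f\cdot H$ for every polynomial $f$ on $\slgrp_2(\CC)$, and then deduce the statement from the fact that $V$, $W$, $H$ span the tangent space at each point.

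For the first stage I would argue by the degree of the polynomial coefficient. The base case is the Lemma above: it already supplies all multiples of $V$, $W$, $H$ by polynomials of degree at most one, through a short explicit chain of brackets beginning with $[W,(b+c)W] = aW$ and $[W,dW] = bW$. To climb higher, I would first generate the pure-power multiples $a^{k}W$, $b^{k}W$, $c^{k}V$, $d^{k}V$ by iterating brackets such as $[[V,b^{k}W],bW] = -(k+3)b^{k+1}W$ together with its three symmetric analogues. Applying the Kaliman--Kutzschebauch formula \eqref{KKformula} to $cW$ and $c^{k}V$ (and the analogous triples) then produces the one-variable families $a^{k}H$, $b^{k}H$, $c^{k}H$, $d^{k}H$; bracketing these among themselves, as in $[c^{m}H,a^{k}H] = (k+m)\,a^{k}c^{m}H$, yields the two-variable monomial multiples of $H$; and $[a^{k}c^{m}H,b^{\ell}d^{n}H] = (-k+m-\ell+n)\,a^{k}b^{\ell}c^{m}d^{n}H$ supplies the general monomial multiple $a^{k}b^{\ell}c^{m}d^{n}H$ whenever the scalar $-k+m-\ell+n$ is nonzero. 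Once every polynomial multiple of $H$ lies in $\mathfrak{g}$, the identity $[V,fH] = V(f)\,H - 2fV$ together with its counterpart for $W$ transfers every polynomial coefficient back onto $V$ and $W$.

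The step I expect to be the genuine obstacle is the degenerate case $m+n = k+\ell$, in which $[a^{k}c^{m}H,b^{\ell}d^{n}H]$ has vanishing scalar and gives nothing. I would dispose of it with a second application of \eqref{KKformula}, now with the auxiliary function $h$ taken to be $a$ (respectively $b$), so that the formula returns a monomial multiple of $H$ carrying one additional factor of $a$ (respectively $b$), subject only to the harmless exceptional condition $-\ell+n-1\neq 0$ (respectively $-k+m-1\neq 0$). Since these two exceptional conditions can hold simultaneously only when $m+n = k+\ell+2$, which is incompatible with the relation $m+n = k+\ell$, at least one of the two identities always applies and the remaining monomials are reached. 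This completes the first stage.

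For the second stage, $V$, $W$, $H$ are the left-invariant vector fields attached to a basis of $\slalg_2(\CC)$, hence everywhere linearly independent and spanning $T_{x}\slgrp_2(\CC)$ at every point $x$. Therefore the morphism of coherent sheaves from the free $\hol$-module of rank three to the sheaf of polynomial vector fields given by $(g_1,g_2,g_3)\mapsto g_1V+g_2W+g_3H$ is surjective, and since $\slgrp_2(\CC)$ is affine, in particular Stein, Cartan--Serre's Theorem B makes $H^1$ of the kernel sheaf vanish, so the morphism is surjective on global sections. Hence every polynomial vector field equals $g_1V+g_2W+g_3H$ for some polynomials $g_1,g_2,g_3$, and each summand belongs to $\mathfrak{g}$ because $\mathfrak{g}$ is a $\CC$-vector space containing every monomial multiple of $V$, $W$, $H$ by the first stage. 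Consequently $\mathfrak{g}$ is the full Lie algebra of polynomial vector fields, which is the assertion of the theorem.
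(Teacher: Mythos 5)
Your proposal is correct and follows essentially the same route as the paper's own proof: the degree-one lemma as base case, the pure-power bracket identities, the Kaliman--Kutzschebauch applications to reach $a^kH,\dots,d^kH$ and then general monomial multiples of $H$, the identical treatment of the degenerate case $m+n=k+\ell$ by inserting an extra factor $a$ or $b$, the transfer back to $V$ and $W$, and the concluding Theorem B argument using that $V,W,H$ span the tangent space everywhere. There is nothing to add or correct.
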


\begin{corollary}
The group of holomorphic automorphisms generated by the flows of $V$, $W$, $(b+c) W$ and $d H$ acts infinitely transitively on $\slgrp_2(\CC)$.
\end{corollary}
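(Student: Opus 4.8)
The plan is to obtain the corollary as a consequence of Theorem~\ref{thmSL2} through Proposition~\ref{propinftrans}. The group $\slgrp_2(\CC)$ is a smooth affine, hence Stein, manifold of dimension $3 \ge 2$, so the dimension hypothesis is in place. I would first note that Theorem~\ref{thmSL2} already supplies the density property: it exhibits complete vector fields whose generated Lie algebra is \emph{all} of the polynomial vector fields, which is exactly the algebraic density property, and this implies the density property as recalled in Section~\ref{secbackground}. Since $\slgrp_2(\CC)$ is Stein, the polynomial vector fields are dense in the holomorphic ones by Cartan's Theorems~A and~B, so Proposition~\ref{propinftrans} is applicable with $\mathfrak{g}$ the full polynomial Lie algebra.

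Next I would verify that the four fields actually named in the corollary, namely $V$, $W$, $(b+c)W$ and $dH$, generate this same $\mathfrak{g}$, so that $dH$ enters as a bona fide and indispensable generator rather than being discarded. One inclusion is immediate, since $dH = [V, dW]$ and $V, W, (b+c)W, dW$ all lie in $\mathfrak{g}$. For the reverse inclusion it suffices, by Theorem~\ref{thmSL2}, to recover $dW$ from the listed fields, which is achieved by the short bracket chain
\begin{align*}
[V, dH] &= -2\,dV, \\
[W, dV] &= bV - dH, \\
[W, bV] &= -bH, \\
[W, dH] &= bH + 2\,dW.
\end{align*}
The first bracket produces $dV$; the second then yields $bV$ because $dH$ is available; the third gives $bH$; and substituting $bH$ into the fourth isolates $dW$. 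Once $dW$ is recovered, the four listed fields contain the full generating set $V, W, (b+c)W, dW$ of Theorem~\ref{thmSL2} and hence generate $\mathfrak{g}$, which is dense.

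The step I expect to be the main obstacle is the completeness of the fourth generator, since Proposition~\ref{propinftrans} manufactures the infinitely transitive group out of the flows of the \emph{completely integrable} generators of $\mathfrak{g}$. The fields $V$ and $W$ integrate to the unipotent one-parameter subgroups of left translations, and integrating $(b+c)W$ and $dW$ shows these to be complete as well, with flows exponential, respectively polynomial, in the time parameter and preserving $ad - bc = 1$. The field $dH$ requires care: its flow satisfies $\dot a = -ad$, $\dot b = -bd$, $\dot c = cd$ and $\dot d = d^2$, so that $d(t) = d_0/(1 - d_0 t)$ escapes to infinity at $t = 1/d_0$ whenever $d_0 \ne 0$. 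I would therefore treat $dH$ not through its own merely local flow but through its role as the generator $[V, dW]$: the completely integrable generators of $\mathfrak{g}$ to be fed into Proposition~\ref{propinftrans} are $V, W, (b+c)W, dW$, and Proposition~\ref{propflowapprox} guarantees that the local time-$t$ maps of $dH$ are uniform limits on compacta of compositions of the flows of these four complete fields, so adjoining them alters neither the generated group nor its transitivity.

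Finally I would conclude: Proposition~\ref{propinftrans} produces a group of holomorphic automorphisms, generated by the flows of the complete generators just described, that acts $m$-transitively for every $m \in \NN$. Since $\slgrp_2(\CC)$ is smooth we have $(\slgrp_2(\CC))_{\mathrm{reg}} = \slgrp_2(\CC)$, so this is precisely infinite transitivity on all of $\slgrp_2(\CC)$, which proves the corollary.
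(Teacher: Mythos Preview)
Your overall strategy---Theorem~\ref{thmSL2} gives the algebraic density property, hence the density property, and then Proposition~\ref{propinftrans} yields infinite transitivity---is exactly the two-line argument the paper gives. The paper's own proof says nothing more than that.

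Where you diverge is in taking the fourth generator $dH$ seriously. You are right that this does not match Theorem~\ref{thmSL2}, which uses $dW$, and you are right that $dH$ is \emph{not} complete: the equation $\dot d = d^2$ forces blow-up at $t=1/d_0$. The paper's proof simply ignores this discrepancy, which strongly suggests that ``$dH$'' in the corollary is a typographical slip for ``$dW$'' (note that Theorem~\ref{thmSL2} explicitly calls $V, W, (b+c)W, dW$ \emph{complete}). Under that reading your extra bracket computations are unnecessary, though they are correct and do establish that $\{V,W,(b+c)W,dH\}$ and $\{V,W,(b+c)W,dW\}$ generate the same Lie algebra.

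If, on the other hand, one insists on the literal statement with $dH$, then there is a genuine gap that your workaround does not close. Since $dH$ has no global flow, ``the group generated by the flows of $V,W,(b+c)W,dH$'' is, at best, the group generated by the flows of $V,W,(b+c)W$ alone. Your argument via Proposition~\ref{propflowapprox} shows that local time-$t$ maps of $dH$ are approximable by compositions of flows of $V,W,(b+c)W,dW$, but this runs in the wrong direction: it places those local maps inside the closure of the group generated by $V,W,(b+c)W,dW$, not the other way around. You have not shown (and the paper's methods do not show) that the three complete fields $V,W,(b+c)W$ by themselves already generate a dense Lie algebra, which is what Proposition~\ref{propinftrans} would require. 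So the literal statement remains unproved; the intended statement, with $dW$, is what both you and the paper actually establish.
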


\begin{proof}
Theorem \ref{thmSL2} implies the (algebraic) density property for $\slgrp_2(\CC)$. This corollary then follows from Proposition \ref{propinftrans}.
\end{proof}

\section{Quadratic Singular Danielewski surface}
\label{secquadratic}

\begin{definition}
We consider the following singular quadratic Danielewski surface
\[
D = \{ (x,y,z) \in \CC^3 \,:\, x y = z^2 \}
\]
and the following complete vector fields
\begin{align*}
\Theta &= 2 z \frac{\partial}{\partial x} + y \frac{\partial}{\partial z} \\
\Xi    &= 2 z \frac{\partial}{\partial y} + x \frac{\partial}{\partial z} \\
H &= 2 y \frac{\partial}{\partial y} - 2 x \frac{\partial}{\partial x} = [\Theta, \Xi]
\end{align*}
\end{definition}
Note that $[H,\Theta] = 2 \Theta$ and $[H,\Xi] = -2 \Xi$, i.e.\ $\Theta$ and $\Xi$ form a $\slalg_2$-pair. However, the singular surface $D$ is not a homogeneous space of $\slgrp_2(\CC)$.

We will need the following equations where we act with $\Theta$ and $\Xi$, respectively, from the left.
\begin{align}
[\Theta, x \Xi] &= 2 z \Xi + x H \label{th-act1} \\
[\Theta, [\Theta, x \Xi]] &= 2 y \Xi + 4 z H - 2 x \Theta \label{th-act2} \\
[\Theta, [\Theta, [\Theta, x \Xi]]] &= 6 y H - 12 z \Theta \label{th-act3} \\
[\Theta, [\Theta, [\Theta, [\Theta, x \Xi]]]] &= - 24 y \Theta \label{th-act4} \\
[\Xi, y \Theta] &= 2 z \Theta - y H \label{xi-act1} \\
[\Xi, [\Xi, y \Theta]] &= 2 x \Theta - 4 z H - 2 y \Xi \label{xi-act2} \\
[\Xi, [\Xi, [\Xi, y \Theta]]] &= -6 x H - 12 z \Xi \label{xi-act3} \\
[\Xi, [\Xi, [\Xi, [\Xi, y \Theta]]]] &= - 24 x \Xi \label{xi-act4}
\end{align}
Note that if not sending it directly to zero, none of $\Theta$, $\Xi$ and $H$ can reduce the polynomial degree. We therefore need $\Theta$ and $\Xi$ (or linear combination thereof) for sure, since they can't be produced otherwise. Also note that the r.h.s.\ of the equations \eqref{th-act2} and \eqref{xi-act2} only differ by a sign.
The above computation shows that $\Theta$, $\Xi$ and $x \Xi$ can generate $y \Theta$. Similarly, $\Theta$, $\Xi$ and $y \Theta$ can generate $x \Xi$.

\begin{lemma}
\label{lem-singular-algebraic}
The Lie algebra generated by $\Xi, \Theta, x \Xi$ contains
\[
H, \; x^k \Xi, \; y^k \Theta \quad \mbox{for all} \; k \in \NN
\]
\end{lemma}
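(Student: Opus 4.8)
The plan is to follow the pattern used for $\slgrp_2(\CC)$ in Section~\ref{secSL2}: first pin down $H$ and the degree-one fields, then raise the polynomial degree one unit at a time with a single commutator identity. Write $\mathfrak{g}$ for the Lie algebra generated by $\Xi$, $\Theta$, $x\Xi$. Immediately $H = [\Theta,\Xi] \in \mathfrak{g}$. Bracketing $x\Xi$ against $\Theta$ four times and using Equation~\eqref{th-act4} gives $[\Theta,[\Theta,[\Theta,[\Theta, x\Xi]]]] = -24\, y\Theta$, so $y\Theta \in \mathfrak{g}$; together with the generator $x\Xi$ this settles the $k=1$ case of both families.

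For the inductive step I would establish, for every $k \in \NN$, the identity
\[
\bigl[\,[\Theta,\, x^{k}\Xi],\; x\Xi\,\bigr] \;=\; -2(k+2)\, x^{k+1}\Xi .
\]
It is obtained by applying the Leibniz rule $[fA,\,gB] = fg[A,B] + f\,A(g)\,B - g\,B(f)\,A$ twice, together with the relations $\Theta(x^k) = 2kzx^{k-1}$, $\Xi(z) = x$, $\Xi(x) = 0$, $H(x) = -2x$ and $[H,\Xi] = -2\Xi$ on $D$: one first gets $[\Theta,\,x^k\Xi] = 2kzx^{k-1}\Xi + x^k H$, and bracketing the two summands with $x\Xi$ contributes $-2k\,x^{k+1}\Xi$ and $-4\,x^{k+1}\Xi$, respectively. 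If $x^k\Xi \in \mathfrak{g}$ then the left-hand side lies in $\mathfrak{g}$ (as $\Theta, x\Xi \in \mathfrak{g}$), and since $-2(k+2) \neq 0$ we get $x^{k+1}\Xi \in \mathfrak{g}$; starting from the generator $x\Xi$ this yields all $x^k\Xi$. Since $D$ and the relations used are invariant under the involution $x \leftrightarrow y$, $\Theta \leftrightarrow \Xi$ (which sends $H \mapsto -H$, harmlessly), the mirror identity $[\,[\Xi,\, y^{k}\Theta],\, y\Theta\,] = -2(k+2)\, y^{k+1}\Theta$ holds, and feeding in $y\Theta \in \mathfrak{g}$ as the base case produces $y^k\Theta \in \mathfrak{g}$ for all $k$ by the same induction.

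There is no genuine obstacle here: the entire point is to spot the right bracket to iterate, after which the verification is routine bookkeeping with the Leibniz rule on $D$. The one subtlety worth isolating is that a single bracket $[\Theta,\,x^k\Xi]$ still carries the parasitic term $2kzx^{k-1}\Xi$, and it is precisely the second bracket against $x\Xi$ --- which kills $\Xi$-derivatives of $x$ and turns $z$ into $x$ --- that absorbs this term and leaves a pure nonzero multiple of $x^{k+1}\Xi$; this is why one brackets with $x\Xi$ rather than iterating $\Theta$ further, in exact analogy with the formula $[[V, b^kW], bW] = -(k+3)b^{k+1}W$ used for $\slgrp_2(\CC)$.
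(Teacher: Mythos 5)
Your proof is correct and follows essentially the same route as the paper: the key identity $[[\Theta, x^k\Xi], x\Xi] = -(2k+4)\,x^{k+1}\Xi$ (your $-2(k+2)$ is the same coefficient) is exactly the induction step used there, $y\Theta$ is obtained from the iterated brackets \eqref{th-act1}--\eqref{th-act4}, and the $y^k\Theta$ family follows by the same $x \leftrightarrow y$, $\Theta \leftrightarrow \Xi$ symmetry invoked in the paper.
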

\begin{proof}
By induction in $k \in \NN$ we obtain all terms of the form $x^k \Xi$:
\[
[ [\Theta, x^k \Xi], x \Xi] = [2k x^{k-1} z \Xi + x^k H, x \Xi] = - (2k+4) \cdot x^{k+1} \Xi
\]
From equations \eqref{th-act1}--\eqref{th-act4} we obtain $y \Theta$ and proceed with the calculation analogous to the above, where we reverse the roles of $\Theta$ and $\Xi$ as well as $x$ and $y$, respectively.
\end{proof}

Note that the flows of $\Xi, \Theta, x \Xi$ are all algebraic. 
If we also allow holomorphic flows of complete algebraic vector fields, then we can generate a larger Lie algebra containing the above, if we take $\Xi, \Theta, z \Xi$ as generators:

\begin{lemma}
\label{lem-singular-alllinear}
The Lie algebra generated by $\Xi, \Theta, z H, z \Xi$ contains all the vector fields with all polynomial coefficients of degree one in front of $\Xi, \Theta, H$.
\end{lemma}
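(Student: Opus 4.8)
The plan is to produce, starting from the four generators $\Xi,\Theta,zH,z\Xi$, each of the twelve vector fields $p\,\Xi$, $p\,\Theta$, $p\,H$ with $p\in\{1,x,y,z\}$; these span the asserted space because $1,x,y,z$ are linearly independent in the coordinate ring $\CC[x,y,z]/(xy-z^2)$ of $D$ (the single relation being quadratic). We already have $\Xi$, $\Theta$, $z\Xi$, $zH$, and $H=[\Theta,\Xi]$ for free, so only $x\Xi$, $y\Xi$, $x\Theta$, $y\Theta$, $z\Theta$, $xH$, $yH$ remain. Everything will be obtained by iterated bracketing, using nothing more than the derivative values $\Theta(x)=2z,\ \Theta(y)=0,\ \Theta(z)=y,\ \Xi(x)=0,\ \Xi(y)=2z,\ \Xi(z)=x$, the relations $[H,\Theta]=2\Theta$, $[H,\Xi]=-2\Xi$, and the Leibniz rule $[fU,gW]=fg[U,W]+f\,U(g)\,W-g\,W(f)\,U$.

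Concretely, I would compute the identities
\begin{align*}
[\Xi,\,z\Xi] &= x\Xi,\\
[\Theta,\,z\Xi] - zH &= y\Xi,\\
[zH,\,\Xi] + 2z\Xi &= -xH,\\
[\Theta,\,y\Xi] &= yH,\\
[\Theta,\,xH] - 2zH &= -2x\Theta,\\
[\Xi,\,y\Theta] + yH &= 2z\Theta,
\end{align*}
each a one-line application of the Leibniz rule and the $\mathfrak{sl}_2$-relations (the last one being equation~\eqref{xi-act1}), and read them in the indicated order. The first identity puts $x\Xi$ into the Lie algebra; this is the key step, because once $x\Xi$ is available, Lemma~\ref{lem-singular-algebraic} --- equivalently the fourfold bracket $[\Theta,[\Theta,[\Theta,[\Theta,x\Xi]]]]=-24\,y\Theta$ of equations~\eqref{th-act1}--\eqref{th-act4} --- also yields $y\Theta$. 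Using in addition that $z\Xi$ and $zH$ are generators, the remaining identities then successively produce $y\Xi$, $xH$, $yH$, $x\Theta$, and $z\Theta$, so all twelve fields lie in the generated Lie algebra, which proves the lemma.

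The only genuinely non-routine point is the choice of first move. Since $\Theta$ annihilates $y$ and $\Xi$ annihilates $x$, the brackets among $\Xi,\Theta,H$ and their obvious combinations never see the coordinates $x$ and $y$; one has to exploit $\Xi(z)=x\neq 0$ (together with the generator $zH$) to inject a first degree-one term, and it is precisely $x\Xi=[\Xi,z\Xi]$ that then unlocks $y\Theta$ through the fourfold $\Theta$-bracket. After that, the symmetry between the two coordinates is broken and the rest of the list follows from the mechanical computations above; I do not expect any further difficulty.
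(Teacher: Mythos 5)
Your proposal is correct and takes essentially the same approach as the paper: both arguments reduce the lemma to producing the twelve fields $p\,\Xi$, $p\,\Theta$, $p\,H$ with $p\in\{1,x,y,z\}$ by elementary bracket identities among the four generators, and all of your displayed identities check out. The only (harmless) difference is the order of the opening moves: you obtain $x\Xi=[\Xi,z\Xi]$ in a single bracket and then $y\Theta$ from the fourfold $\Theta$-bracket \eqref{th-act1}--\eqref{th-act4}, whereas the paper first extracts $y\Theta$ from the iterated brackets of $\Theta$ with $z\Xi$ and then recovers $x\Xi$ via \eqref{xi-act1}--\eqref{xi-act4}.
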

\begin{proof}
We proceed step by step as follows:
\begin{align}
[\Theta, z \Xi] &= y \Xi + z H \label{th-nonvol1} \\
[\Theta, y \Xi + z H] &= 2 y H - 2 z \Theta \label{th-nonvol2} \\
[\Theta, 2 y H - 2 z \Theta] &= - 6 y \Theta
\end{align}
After obtaining $y \Theta$, we use the equations from \eqref{xi-act1} to \eqref{xi-act4} to obtain $x \Xi$. By taking linear combinations of \eqref{th-nonvol2} and \eqref{xi-act1}, we obtain $yH$ and $z\Theta$. Equation \eqref{th-act1} gives us also $xH$.
Finally, using $zH$ and equation \eqref{th-nonvol1} we obtain $y \Xi$. Analogous to \eqref{th-nonvol1} we have 
\[
[\Xi, z \Theta] = x \Theta - z H
\]
and thus obtain $x \Theta$.
\end{proof}

%\begin{remark}
%Note that we could obtain the result easily with the three generators $\Xi, \Theta, y \Xi$. However, $y \Xi$ is not a complete vector field.
%\end{remark}

%\begin{remark}
%By solving explicitly the corresponding system of ODEs, one can show that $\Xi + H$ is a complete vector field. However, $z$ is not in the second kernel of $\Xi + H$, and $z\Xi + zH$ is not complete, an explicit solution (with a singularity) for the flow map can be found easily. Otherwise one could have reduced the set of complete generators to three.
%\end{remark}

\begin{theorem}
\label{thmsingular}
The four complete vector fields $\Xi, \Theta, z \Xi, z H$ generate the Lie algebra
\[\{f \cdot \Xi + g \cdot \Theta + h \cdot H \,:\, f,g,h \in \CC[M] \}\]
on the singular quadratic Danielewski surface $M := \{(x,y,z) \in \CC^3 \,:\, x y - z^2 = 0 \}$.
\end{theorem}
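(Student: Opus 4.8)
The plan is to show that the Lie algebra $\mathfrak{g}$ generated by $\Xi,\Theta,z\Xi,zH$ equals $\mathcal{L}:=\{f\Xi+g\Theta+hH : f,g,h\in\CC[M]\}$. First one checks that $\mathcal{L}$ is genuinely a Lie algebra: from $[\Theta,\Xi]=H$, $[H,\Xi]=-2\Xi$, $[H,\Theta]=2\Theta$ and the Leibniz rule (using that $\Xi,\Theta,H$ carry $\CC[M]$ into itself) one gets $[\mathcal{L},\mathcal{L}]\subseteq\mathcal{L}$, so $\mathfrak{g}\subseteq\mathcal{L}$ and only $\mathcal{L}\subseteq\mathfrak{g}$ remains. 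I would then exploit two gradings. Since $z^2=xy$, the ring $\CC[M]=\CC[x,y]\oplus\CC[x,y]\,z$ is free of rank two over $\CC[x,y]$, with monomial basis $\{x^ay^b\}\cup\{x^ay^bz\}$, so it suffices to place $p\,\Xi$, $p\,\Theta$, $p\,H$ in $\mathfrak{g}$ for each such basis monomial $p$. And since $\Xi,\Theta,H$ preserve the total degree of a function, $\mathcal{L}$ is graded by the degree of the coefficient and the four generators are homogeneous (of degrees $0,0,1,1$); hence $\mathfrak{g}$ is a graded subalgebra and one may induct on the degree $n$. The cases $n\le 1$ are exactly Lemma~\ref{lem-singular-alllinear}; feeding $x\,\Xi\in\mathfrak{g}$ into Lemma~\ref{lem-singular-algebraic} one also obtains $x^k\Xi$, $y^k\Theta$ (and $H$) for all $k$, to be used freely.

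The inductive step reduces to producing the degree-$n$ fields $p\,H$: once those are available, the identities $[p\,H,\Theta]=2p\,\Theta-\Theta(p)\,H$ and $[p\,H,\Xi]=-2p\,\Xi-\Xi(p)\,H$ (with $\Theta(p),\Xi(p)$ again of degree $n$) immediately give $p\,\Theta,p\,\Xi\in\mathfrak{g}$. So, assuming $\mathcal{L}_m\subseteq\mathfrak{g}$ for all $m<n$, I would get the degree-$n$ $H$-multiples as follows. For $x^nH$ (and symmetrically $y^nH$): compute $[z\Xi,x^{n-1}H]=2z x^{n-1}\Xi$ and $[x^{n-1}\Xi,zH]=x^nH+2n\,z x^{n-1}\Xi$, then subtract. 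For $x^ay^bH$ with $a,b\ge 1$, $a+b=n$: use $[x^aH,y^bH]=2n\,x^ay^bH$. For $x^ay^bzH$ with $a+b=n-1$ and $a\ne b$: use $[zH,x^ay^bH]=2(b-a)\,z x^ay^bH$. In each case the fields on the left lie in $\mathfrak{g}$ by the induction hypothesis.

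The only basis monomial left uncovered is the ``diagonal'' one, $z^n=x^{(n-1)/2}y^{(n-1)/2}z$ (present for odd $n$), and this is the single spot where the elementary brackets $[p\,H,q\,H]$ yield a vanishing coefficient. Here I would invoke the Kaliman--Kutzschebauch formula~\eqref{KKformula} with $\Gamma=H$, $\Delta=\Xi$, and the $H$-invariant function $h=z$ (so that $\Gamma(h)=H(z)=0$): taking $f=x^{(n-3)/2}$ and $g=y^{(n-1)/2}z$ yields
\[
[z f\,H,\; g\,\Xi] - [f\,H,\; zg\,\Xi] \;=\; -\,gf\,\Xi(z)\,H \;=\; -\,x^{(n-1)/2}y^{(n-1)/2}z\,H \;=\; -\,z^nH,
\]
and a degree count shows that for $n\ge 5$ all four fields on the left have degree $\le n-1$, hence lie in $\mathfrak{g}$. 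The small cases are immediate: $z^1H=zH$ is a generator, and $z^3H$ comes from $[xzH,yH]=4z^3H$ using $xzH\in\mathfrak{g}$ from the case $n=2$. This exhausts $\mathcal{L}_n$, closes the induction, and gives $\mathcal{L}\subseteq\mathfrak{g}$.

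I expect the handling of the diagonal terms $z^nH$ to be the only real difficulty: these are precisely the coefficients for which the cheap commutators fail, and the Kaliman--Kutzschebauch trick must be arranged so that its auxiliary fields genuinely drop in degree, which is what forces the cases $n\le 3$ to be treated by hand. Everything else is a lengthy but mechanical sequence of bracket computations, organised so that each new field is expressed through fields of strictly lower degree together with the universally available $\Xi,\Theta,H$, their degree-one multiples, and the $x^k\Xi$, $y^k\Theta$.
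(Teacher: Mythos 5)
Your argument is correct --- I checked the key identities: $[z\Xi,x^{n-1}H]=2zx^{n-1}\Xi$ and $[x^{n-1}\Xi,zH]=x^nH+2nzx^{n-1}\Xi$, so their combination gives $x^nH$; $[x^aH,y^bH]=2(a+b)x^ay^bH$; $[zH,x^ay^bH]=2(b-a)x^ay^bzH$; $[xzH,yH]=4z^3H$; and your single instance of \eqref{KKformula} with $\Gamma=H$, $\Delta=\Xi$, $h=z$, $f=x^{(n-3)/2}$, $g=y^{(n-1)/2}z$ indeed yields $-z^nH$, with all four auxiliary fields of coefficient degree at most $n-1$ exactly when $n\geq 5$, so the small odd degrees must (and do) get separate treatment. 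However, your route is organised quite differently from the paper's. The paper does not induct on the degree of the coefficient: it first produces the families $zx^k\Xi$, $zy^k\Theta$, then all $z^k\Xi$, $z^k\Theta$ (with an ad hoc fix when passing from $z^2\Xi$ to $z^3\Xi$, playing the role of your hand-made case $z^3H$), then $z^{k+1}H$, and finally applies \eqref{KKformula} repeatedly with separated-variable functions $f(y)$, $g(x)$, $h(z)$ to build arbitrary coefficients in front of $H$, transferring them afterwards to $\Theta$ and $\Xi$ again via \eqref{KKformula}-type combinations. You instead run one induction over the monomial basis $\{x^ay^b\}\cup\{x^ay^bz\}$ of $\CC[M]$, obtain almost all monomial $H$-coefficients from elementary brackets of lower-degree $H$-multiples, reserve \eqref{KKformula} for the single ``diagonal'' monomial $z^nH$ (which is precisely where $H(z)=0$ is needed), and transfer to $\Theta$- and $\Xi$-coefficients by plain brackets using $[H,\Theta]=2\Theta$, $[H,\Xi]=-2\Xi$. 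What each buys: your graded induction gives tighter bookkeeping, isolates exactly where the Kaliman--Kutzschebauch trick is unavoidable, and makes explicit the (easy) containment $\mathfrak{g}\subseteq\mathcal{L}$ that the paper leaves implicit; the paper's computation produces along the way explicit families such as $z^k\Xi$ and $z^k\Theta$, which are of some independent interest, and needs no appeal to the grading of $\CC[M]$ or to its monomial basis. One small stylistic point: your citations of Lemma \ref{lem-singular-algebraic} are redundant, since $x^{k}\Xi$ and $y^{k}\Theta$ of degree below $n$ are already supplied by your induction hypothesis once Lemma \ref{lem-singular-alllinear} settles degree one.
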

\begin{proof} \hfill
\begin{enumerate}
\item Using Lemma \ref{lem-singular-algebraic} and Lemma \ref{lem-singular-alllinear} we obtain the following (actually complete) vector fields for all $k \in \NN_0$:
\begin{align*}
[y \Xi, x^k \Xi] &= -2z x^k \Xi \\
[x \Theta, y^k \Theta] &= -2z y^k \Theta 
\end{align*}
\item Next, we obtain all vector fields of the form $z^k \Xi$. We proceed by induction in $k$. However, the induction step breaks down when passing from $z^2 \Xi$ to $z^3 \Xi$.
\begin{align*}
[z^k \Xi, y \Xi] &= (2 - k) z^{k+1} \Xi \\
[z \Xi, y H] - [\Theta, [z \Xi, y \Xi]] &= z^2 H \\
[\Theta, z^2 \Xi] &= 2 x y \Xi - z^2 H \\
[z \Xi, y z \Xi] &= 2 z^3 \Xi
\end{align*}
\item
Similarly, we can obtain all vector fields of the form $z^k \Theta$.
\item
\[
(k+1)[z^k \Xi, y H] - 2[\Theta, z^{k+1} \Xi] = 2k z^{k+1} H
\]
\item
Let $f(y), g(x), h(z)$ be any polynomial in $y, x, z$, respectively.
\\
We apply the Kaliman--Kutzschebauch formula \eqref{KKformula} twice:
\begin{align*}
[z f(y) \Theta, h(z) H]-[f(y) \Theta, z h(z) H] &= -f(y) h(z) y H \\
[z g(x) \Xi, -f(y) h(z) y H]-[g(x) \Xi, -z \cdot f(y) h(z) y H] &=  x y \cdot g(x) f(y) h(z) H
\end{align*}
Similarly, we can also obtain
\begin{align*}
[z g(x) \Xi, h(z) H]-[g(x) \Xi, z h(z) H] &= -g(x) h(z) x H
\end{align*}
By taking linear combinations, we obtain all polynomial coefficients in front of $H$.

Similarly, we obtain a result for $\Theta$, while the calculation for $\Xi$ would be completely analogous to $\Theta$:
\begin{align*}
[y f(y) \Theta, g(x) h(z) H]-[f(y) \Theta, y g(x) h(z) H] &= -2y f(y) g(x) h(z) \Theta \\
[x f(y) \Theta, g(x) h(z) H]-[f(y) \Theta, x g(x) h(z) H] &= 2x f(y) g(x) h(z) \Theta - 2z f(y) g(x) h(z) H
\end{align*}
Thus, we have obtained all polynomial coefficients in front of $\Xi$, $\Theta$ and $H$. \qedhere
%\item All polynomial vector fields on $D$ can be written as linear combinations of $\Theta$, $\Xi$, and $H$ with polynomial coefficients. \qedhere
\end{enumerate}
\end{proof}

\begin{lemma}

The vector fields $\Xi, \Theta, H$ together span the tangent space of $\{(x,y,z) \in \CC^3 \,:\, x y = z^2 \}$ in each point except the origin.
\end{lemma}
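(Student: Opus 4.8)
\emph{Proof proposal.} The plan is to argue directly from the defining equation. Write $F(x,y,z) = xy - z^2$, so that $D = \{ F = 0 \}$. Since $dF = y\,dx + x\,dy - 2z\,dz$ vanishes only at the origin, $D$ is smooth away from $0$, and for every $p \in D \setminus \{0\}$ the tangent space $T_p D$ is the two-dimensional kernel of $dF_p$ in $\CC^3$. As already observed, $\Theta F = \Xi F = H F = 0$, so the values $\Theta_p, \Xi_p, H_p$ all lie in $T_p D$; hence their span is automatically contained in $T_p D$, and it suffices to show that this span has dimension at least $2$ at every point $p \neq 0$ of $D$.

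Next I would read off the matrix $A(p)$ whose three rows are the coefficient vectors of $\Theta$, $\Xi$, $H$ at $p$ (in the frame $\partial_x, \partial_y, \partial_z$):
\[
A(p) = \begin{pmatrix} 2z & 0 & y \\ 0 & 2z & x \\ -2x & 2y & 0 \end{pmatrix}.
\]
A one-line computation gives $\det A \equiv 0$ (as it must, since the three rows lie in the common $2$-plane $T_p D$ for $p \in D \setminus \{0\}$), so the question is precisely whether $\operatorname{rank} A(p) \geq 2$. Computing the nine $2 \times 2$ minors of $A$, one finds that up to nonzero constant factors they are exactly the six quadratic monomials $x^2,\ y^2,\ z^2,\ xy,\ xz,\ yz$.

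Finally, if all of these minors vanish at a point $p = (x,y,z)$, then $x^2 = y^2 = z^2 = 0$ forces $x = y = z = 0$, i.e.\ $p = 0$. Contrapositively, for every $p \in D \setminus \{0\}$ at least one $2 \times 2$ minor of $A(p)$ is nonzero, so $\operatorname{rank} A(p) \geq 2$; together with the first paragraph this shows that $\Theta_p, \Xi_p, H_p$ span $T_p D$. I do not anticipate any real obstacle: the only step that requires a little care is the dimension bookkeeping at the start — ensuring that the span is \emph{exactly} $T_p D$ rather than a larger subspace, which is where smoothness of $D$ off the origin is used — and after that the argument is the finite monomial computation above.
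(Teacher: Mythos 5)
Your proof is correct, and it takes a (mildly but genuinely) different route from the paper. The paper argues by a case distinction on the surface: on $\{x \neq 0\} \cap \{y \neq 0\}$ (where automatically $z \neq 0$ on $D$) it checks that $\Theta$ and $\Xi$ alone are linearly independent and hence span the two-dimensional tangent space, while on $\{x = 0,\, y \neq 0\}$ it uses the pair $\Theta, H$ and on $\{y = 0,\, x \neq 0\}$ the pair $\Xi, H$. You instead make a uniform, case-free argument: the $2 \times 2$ minors of the coefficient matrix $A$ are, up to nonzero constants, exactly the six quadratic monomials $x^2, y^2, z^2, xy, xz, yz$ (your computation checks out), so their common zero locus in $\CC^3$ is only the origin and $A(p)$ has rank at least $2$ at every $p \neq 0$ — in fact everywhere in $\CC^3 \setminus \{0\}$, not just on $D$ — and then tangency ($\Theta F = \Xi F = H F = 0$) together with $\dim T_p D = 2$ at smooth points gives the spanning. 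What each buys: your version avoids the case analysis and gives the slightly stronger rank statement off the surface; the paper's version is shorter and records explicitly which pair of fields spans over which coordinate region, which is the information implicitly reused later via the set $S = \{x = 0\} \cup \{y = 0\}$ where $\Theta$ and $\Xi$ alone fail to span. One small remark on your closing comment: the care needed is not that the span could be \emph{larger} than $T_p D$ — tangency already gives the inclusion — but only that $T_p D$ has dimension exactly $2$ at smooth points, which you do use correctly.
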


\begin{proof}
Let us recall the definition of these vector fields:
\begin{align*}
\Theta &= 2 z \frac{\partial}{\partial x} + y \frac{\partial}{\partial z} \\
\Xi    &= 2 z \frac{\partial}{\partial y} + x \frac{\partial}{\partial z} \\
H &= 2 y \frac{\partial}{\partial y} - 2 x \frac{\partial}{\partial x} = [\Theta, \Xi]
\end{align*}
On each point of $\{ x \neq 0 \} \cap \{ y \neq 0 \}$ the vector fields $\Theta$ and $\Xi$ are linearly independent in $\CC^3$ and hence must span the $2$-dimensional tangent space.
On $\{ x = 0 \} \cap \{ y \neq 0 \}$ the vector fields $\Theta$ and $H$ span, and on $\{ y = 0 \} \cap \{ x \neq 0 \}$ the vector fields $\Xi$ and $H$ span, by the same argument.
\end{proof}

\begin{remark}
One might also decide to work with $\Xi$, $\Theta$ and some of their pullbacks instead: Pulling back $\Xi$ with the flow of $\Theta$ for fixed time $t$ -- and vice versa, we obtain new vector fields $\widetilde \Xi_t$, $\widetilde \Theta_t$. A direction calculation yields: 
\begin{align*}
\widetilde{\Xi}_t &= 2t (x - t z) \frac{\partial}{\partial x} + 2 (z - t y) \frac{\partial}{\partial y} + (x - t^2 y) \frac{\partial}{\partial z} = \Xi - t H - t^2 \Theta, \\
\widetilde{\Theta}_t &= 2 (z - t x) \frac{\partial}{\partial x} +  2t (y - t z)\frac{\partial}{\partial y} + (y - t^2 x) \frac{\partial}{\partial z}  = \Theta - t H - t^2 \Xi,
\end{align*}
which shows that these pullbacks are already in the span of $\Xi, \Theta$ and $H$.

\smallskip

This can also be compared to a result on algebraic ellipticity in the monograph of Alarc{\'o}n, Forstneri\v{c} and L{\'o}pez \cite{MR4237295}*{Proposition 1.15.3} where vector fields $V^1, V^2, V^3$ with polynomial flows are given explicitly in coordinates $(z_1, z_2, z_3) \in \CC^3$ on the quadric $z_1^2 + z_2^2 + z_3^2 = 0$; they are related by
\[
V^3 = \frac{1}{2} [V^1, V^2] - \frac{1}{4}[V^1 + V^2,[V^1, V^2]]
\]
By a global linear change of coordinates, we can map our $\frac{+i}{2}\Xi$ and $\frac{-i}{2}\Theta$ to those $V^1$ and $V^2$, respectively:
\[
(z_1 + i z_2, \, z_1 - i z_2, \, i z_3) = (x,y,z)
\]
\end{remark}

\begin{corollary}
\label{corsinginftrans}
The group of the holomorphic automorphisms generated by the flows of $\Xi$, $\Theta$, $z \Xi$ and $z H$ acts infinitely transitively on the regular locus of $\{ (x,y,z) \in \CC^3 \,:\, x y = z^2 \}$.
\end{corollary}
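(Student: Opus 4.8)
\emph{Proof proposal.}
The plan is to run the orbit argument underlying Proposition \ref{propinftrans}, adapted to the singular surface $M$; since on $M$ the Lie algebra $\mathfrak{g}$ generated by the four complete fields $\Xi,\Theta,z\Xi,zH$ is a proper subalgebra of all vector fields, Proposition \ref{propinftrans} cannot be quoted verbatim, but its proof only uses a finite interpolation property that still holds here. Concretely, I would first establish: for any distinct points $p_1,\dots,p_m\in M_{\mathrm{reg}}=M\setminus\{0\}$ and any tangent vectors $v_i\in T_{p_i}M$, there exists $X\in\mathfrak{g}$ with $X(p_i)=v_i$ for every $i$. This is where Theorem \ref{thmsingular} enters: it identifies $\mathfrak{g}$ with the $\CC[M]$-module $\{f\Xi+g\Theta+hH: f,g,h\in\CC[M]\}$, so $\mathfrak{g}$ is closed under multiplication by $\CC[M]$. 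Since $\CC[M]$ separates the finitely many points $p_i$, one chooses $\chi_i\in\CC[M]$ with $\chi_i(p_i)=1$ and $\chi_i(p_j)=0$ for $j\neq i$; by the lemma preceding this corollary the vectors $\Xi(p_i),\Theta(p_i),H(p_i)$ span $T_{p_i}M$, so there is a constant-coefficient combination $Y_i$ of $\Xi,\Theta,H$ with $Y_i(p_i)=v_i$; then $X:=\sum_i\chi_iY_i\in\mathfrak{g}$ has the required values.

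Next I would pass to configuration spaces. Fix $m\in\NN$ and let $G\subseteq\aut(M)$ be the group generated by the complex flows of $\Xi,\Theta,z\Xi,zH$. Each element of $G$ is an automorphism of $M$, hence fixes the singular point $0$ and, being injective, preserves $M_{\mathrm{reg}}$; so $G$ acts by biholomorphisms on the configuration space $\mathcal{C}_m:=\{(p_1,\dots,p_m)\in M_{\mathrm{reg}}^m:p_i\neq p_j\text{ for }i\neq j\}$. The four diagonal vector fields on $M^m$ are complete and preserve $\mathcal{C}_m$, hence restrict to complete fields on $\mathcal{C}_m$; brackets of diagonal fields being diagonal, the Lie algebra they generate is the diagonal copy of $\mathfrak{g}$. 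As our fields are holomorphic, hence real-analytic, the Sussmann--Nagano orbit theorem applies: every $G$-orbit in $\mathcal{C}_m$ is an immersed submanifold whose tangent space at $P=(p_1,\dots,p_m)$ equals $\{(X(p_1),\dots,X(p_m)):X\in\mathfrak{g}\}$, which by the interpolation step is all of $\bigoplus_i T_{p_i}M=T_P\mathcal{C}_m$. Hence every $G$-orbit is open in $\mathcal{C}_m$.

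To finish, note that the open $G$-orbits partition $\mathcal{C}_m$, so each of them is also closed, while $\mathcal{C}_m$ is connected: the quadric cone $M$ is irreducible and normal, so $M_{\mathrm{reg}}$ is a connected complex surface, $M_{\mathrm{reg}}^m$ is connected, and $\mathcal{C}_m$ is obtained from it by deleting the finitely many diagonals, a proper analytic subset whose removal does not disconnect. Therefore $\mathcal{C}_m$ is a single $G$-orbit, i.e.\ $G$ acts transitively on ordered $m$-tuples of distinct points of $M_{\mathrm{reg}}$; since $m$ is arbitrary, $G$ acts infinitely transitively on $M_{\mathrm{reg}}$.

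I do not expect a real obstacle, given Theorem \ref{thmsingular} and the spanning lemma. The one point to appreciate is that it is the \emph{module} structure of $\mathfrak{g}$ furnished by Theorem \ref{thmsingular}, not density of $\mathfrak{g}$ in all vector fields on $M$ (which fails), that supplies the finite interpolation property, and that completeness of the four generators lets the analytic orbit theorem convert infinitesimal transitivity into genuine transitivity without any approximation step. One could alternatively check that $\mathfrak{m}_0^{\ell}\cdot\mathrm{Der}(\CC[M])\subseteq\mathfrak{g}$ for some $\ell$ — which holds since $\mathrm{Der}(\CC[M])/\mathfrak{g}$ is a finitely generated module supported only at the origin — and then apply the weak relative density property through \cite{MR3320241}*{Theorem 6.3}, but this is not needed for the transitivity statement.
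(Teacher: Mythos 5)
Your argument is correct, but it takes a genuinely different route from the paper. The paper deduces the corollary from a weak algebraic density property of $M$ \emph{relative to the origin}, which follows from Theorem \ref{thmsingular} because the quotient of all polynomial vector fields by the module $\CC[M]\,\Xi+\CC[M]\,\Theta+\CC[M]\,H$ is supported at the singular point; it then reruns the Anders\'en--Lempert-type proof of Proposition \ref{propinftrans} in the relative setting of \cite{MR3320241}*{Theorem 6.3}, approximating isotopies that move finitely many regular points (and fix the origin to sufficient order) by compositions of flows of the four generators. You avoid approximation altogether: the $\CC[M]$-module structure of $\mathfrak{g}$ supplied by Theorem \ref{thmsingular}, combined with the spanning lemma, gives exact finite interpolation of tangent vectors at distinct regular points, and then the analytic orbit theorem on the configuration space $\mathcal{C}_m$ makes every orbit of the group generated by the four complete flows open, so connectedness of $\mathcal{C}_m$ forces a single orbit. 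This is essentially the ``Lie algebra spans, generators complete, hence open orbits'' mechanism familiar from \cite{AFKKZ}, transplanted to complete holomorphic (not necessarily algebraic) flows; it is more elementary and self-contained for the bare transitivity statement, whereas the paper's route costs the relative Anders\'en--Lempert machinery but delivers more, namely approximation of isotopies fixing the singularity by words in the same flows. Two details worth making explicit in a final write-up: since the flows are taken for complex time, apply the real-analytic orbit theorem to the family consisting of each generator $V$ together with $iV$ (the real Lie algebra they generate still evaluates onto $\mathfrak{g}(P)$, which is a complex subspace, and containment of $\mathfrak{g}(P)$ in the orbit's tangent space already gives openness); and the functions $\chi_i\in\CC[M]$ with $\chi_i(p_j)=\delta_{ij}$ exist because $M$ is affine and the points are distinct.
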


\begin{proof}
Theorem \ref{thmsingular} implies the weak algebraic relative density property for $\{ (x,y,z) \in \CC^3 \,:\, x y = z^2 \}$ using the flows of the generators above. This corollary then follows from the same proof as in Proposition \ref{propinftrans}, but we instead have to use relative version with respect to the singularity in the origin.
\end{proof}

\begin{remark}
The strong algebraic density property of $x y = z^2$ relative to the origin is known (but not with finitely many generators) due to \cite{MR3320241}*{Corollary 5.5} with $d=2, e=1$. 
%
%\smallskip
%
%In order to check whether Theorem \ref{thmsingular} implies already the strong algebraic relative density property for $x y = z^2$, we would need to calculate explicitly the necessary pull-backs of the vector fields and then investigate the behavior in the singularity.
\end{remark}

\section{Quadratic Singular Danielewski surface with unipotent subgroups}
\label{secquadraticunipotent}

In this section we discuss how to find finitely many unipotent one-parameter subgroups that generate a subgroup of the algebraic automorphisms and act infinitely transitively on $M := \{ (x,y,z) \in \CC^3 \,:\, x y = z^2 \}$. We first consider two obvious approaches that do not quite work.

\begin{remark}
Outside the singularity in the origin, $M$ is equipped with an algebraic volume form
\begin{equation}
\omega := \frac{dx \wedge dz}{x} = -\frac{dy \wedge dz}{y}
\end{equation}

Recall that by Lemma \ref{lem-singular-algebraic} the Lie algebra generated by $\Xi, \Theta, x\Xi$ contains
\[
\quad H, x^k \Xi, y^k \Theta \quad \mbox{for all} \; k \in \NN.
\]
Each of the vector fields $\Xi, \Theta, x\Xi$ is a locally nilpotent derivation that preserves the algebraic volume form $\omega$. However, it is not clear how other volume preserving vector fields of the form $z^k H$ could be obtained as Lie combinations or even be approximated. Thus, unlike in \cite{Andrist2019} any kind of transitivity result can't follow from an application of the volume density property.
\end{remark}

\begin{remark}
According to the result of Arzhantsev~et~al. \cite{AFKKZ}*{Theorem 2.5} a subgroup generated by algebraic one-parameter subgroups which is saturated and acts with an open orbit, is in fact acting infinitely transitively on that open orbit. 

All the polynomial shears (or, in the terminology of \cite{AFKKZ}: \emph{replicas}) of the locally nilpotent derivations $\Xi$ and $\Theta$ are of the form $f(x) \Xi$ and $g(y) \Theta$, respectively.
Let $G$ be the group generated by their flows. Then we can't expect that the conjugates of $f(x) \Xi$ and $g(y) \Theta$ by group elements in $G$ can be obtained by Lie combinations of $f(x) \Xi$ and $g(y) \Theta$. Hence, the saturation condition of \cite{AFKKZ}*{Theorem 2.5} is not satisfied.
\end{remark}
In the following, let $S := \{ x = 0 \} \cup \{ y = 0 \}$ be the set where $\Theta$ and $\Xi$ are not spanning the tangent space.
%\begin{lemma}
%The vector fields $\Theta$ and $\Xi$ span the tangent space of every point in $M \setminus S$ where
%\[
%S := \{ x = 0 \} \cup \{ y = 0 \} \cup \{ (\sqrt{3}-2)y = x\} \cup \{ (-\sqrt{3}-2)y = x \},
%\]
%and $M \setminus S$ is connected. 
%Both vector fields vanish simultaneously only in the origin.
%\end{lemma}
%\begin{proof}
%We consider the vector fields $\Theta$ and $\Xi$ and the normal vector field to $M$ in $\CC^3$. The condition for $\Theta$ and $\Xi$ to span the tangent space of $M$ in a point $(x,y,z)$ is then equivalent to: 
%\[
%\det \begin{pmatrix}
%2z &  0 &   y \\
% 0 & 2z &   x \\
% y &  x & -2z
%\end{pmatrix}
%= -2z\cdot(4z^2+y^2+x^2)
%= -2z\cdot((x+2y)^2 - 3y^2) \neq 0
%\]
%This means that $\Theta$ and $\Xi$ do not span the tangent space precisely in the set $S = \{ x = 0 \} \cup \{ y = 0 \} \cup \{ (\sqrt{3}-2)y = x\} \cup \{ (-\sqrt{3}-2)y = x \}$. It is easy to see that $\Theta$ or $\Xi$ will point outwards of $S$ except in the origin. Since $S$ is of complex codimension $1$, its complement is connected. 
%\end{proof}
 %Hence, starting in $p \in S$ and flowing along one of them for an arbitrarily small time $t_0$ will lead to a point $q \in M \setminus S$ where both $\Theta$ and $\Xi$ are spanning. The pullbacks of $\Theta$ and $\Xi$ by the time-$t_0$ map will then span the tangent space in the point $p$.
 
\begin{lemma}
\label{lemsingapprox}
Let $B_1, \dots, B_m \subset \CC^3$ be balls of radius $\varepsilon > 0$ centered in $p_1, \dots, p_m \in M \setminus S$ respectively, and let $v_0 \in T_{p_m}$. For sufficiently small $\varepsilon > 0$ and for any $\delta > 0$ there exist polynomials $f, g \in \CC[z]$ such that the following hold for the vector field $V := f(x) \Xi + g(y) \Theta$.
\begin{enumerate}
\item $\displaystyle \| V \|_{B_j} < \delta$ for $j = 1, \dots, m-1$
\item $\displaystyle \| V - v_0 \|_{B_m} < \delta$
\end{enumerate}
\end{lemma}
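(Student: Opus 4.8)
The plan is to convert the vector-field estimate into two independent one-variable polynomial approximation problems — one for $f$, one for $g$ — and to solve each by Runge's theorem. Writing $V$ in the ambient coordinates of $\CC^3$, from $\Xi = 2z\frac{\partial}{\partial y} + x\frac{\partial}{\partial z}$ and $\Theta = 2z\frac{\partial}{\partial x} + y\frac{\partial}{\partial z}$ one gets
\[
V = f(x)\,\Xi + g(y)\,\Theta
  = 2z\,g(y)\,\frac{\partial}{\partial x}
  + 2z\,f(x)\,\frac{\partial}{\partial y}
  + \bigl(x f(x) + y g(y)\bigr)\frac{\partial}{\partial z},
\]
that is, $V(x,y,z) = \bigl(2z\,g(y),\ 2z\,f(x),\ x f(x)+y g(y)\bigr)$ viewed as a $\CC^3$-valued map. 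Since each $p_j=(x_j,y_j,z_j)$ lies in $M\setminus S$ we have $x_j,y_j\neq 0$ and hence $z_j^2 = x_j y_j\neq 0$; so for $\varepsilon$ small every ball $B_j$ is contained in $\{x\neq0,\ y\neq0,\ z\neq0\}$ and $|x|,|y|,|z|\le C$ on $\bigcup_j B_j$ for some constant $C=C(p_1,\dots,p_m)$. Writing $D_j^x, D_j^y\subset\CC$ for the $\varepsilon$-disks about $x_j$ and $y_j$, the displayed formula gives $\|V\|_{B_j}\le 3C\bigl(\|f\|_{\overline{D_j^x}}+\|g\|_{\overline{D_j^y}}\bigr)$, so item~(1) follows as soon as $\|f\|_{\overline{D_j^x}}$ and $\|g\|_{\overline{D_j^y}}$ are small for $j<m$.

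Next I would identify the target values at $p_m$. Because $p_m\notin S$, the vectors $\Xi_{p_m}=(0,2z_m,x_m)$ and $\Theta_{p_m}=(2z_m,0,y_m)$ are linearly independent and hence a basis of $T_{p_m}M$; writing $v_0=(v_0^x,v_0^y,v_0^z)$, there are unique $a,b\in\CC$ with $v_0 = a\,\Xi_{p_m}+b\,\Theta_{p_m}$, namely $a = v_0^y/(2z_m)$ and $b = v_0^x/(2z_m)$, and the remaining equation $x_m a + y_m b = v_0^z$ then holds automatically because $v_0$ is tangent to $M$ at $p_m$. Comparing with the coordinate formula, if $\|f-a\|_{\overline{D_m^x}}<\eta$ and $\|g-b\|_{\overline{D_m^y}}<\eta$ then
\[
\|V-v_0\|_{B_m}\ \le\ C'\eta + C''\varepsilon,
\]
with $C',C''$ depending only on $p_m$ and $v_0$; the $\varepsilon$-term comes from the variation of $x,y,z$ over $B_m$ (note that for a constant choice $f\equiv a$, $g\equiv b$ one has $V(p_m)=v_0$ exactly). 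Hence item~(2) follows once $\eta$ and $\varepsilon$ are small relative to $\delta$.

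It therefore remains to solve, for any prescribed $\eta>0$, the scalar problem: find a one-variable polynomial $f$ with $\|f\|_{\overline{D_j^x}}<\eta$ for $j=1,\dots,m-1$ and $\|f-a\|_{\overline{D_m^x}}<\eta$, and symmetrically a polynomial $g$ on the disks $\overline{D_j^y}$. After shrinking $\varepsilon$ so that $\overline{D_1^x},\dots,\overline{D_m^x}$ are pairwise disjoint (and likewise the $\overline{D_j^y}$), the function that equals $0$ on the first $m-1$ of these disks and $a$ on the last is holomorphic on a neighbourhood of the compact set $K^x:=\bigcup_j\overline{D_j^x}$, and the complement of $K^x$ in $\widehat{\CC}$ is connected, so Runge's approximation theorem produces such an $f$; the same argument gives $g$. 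Choosing first $\varepsilon$ and then $\eta$ sufficiently small in terms of $\delta$ finishes the argument.

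The step I expect to be the genuine obstacle is the pairwise disjointness of the disks: for an arbitrary radius it requires $x_1,\dots,x_m$ pairwise distinct and $y_1,\dots,y_m$ pairwise distinct, which for general points of $M\setminus S$ can fail — e.g.\ $p_j$ and $p_m$ may share the same pair $(x,y)$ and differ only by the sign of $z$, in which case every $V=f(x)\Xi+g(y)\Theta$ that is small near $p_j$ is also small near $p_m$, so no nonzero $v_0$ can be attained. One should therefore either add the (mild) hypothesis that $x_j\neq x_m$ and $y_j\neq y_m$ for $j<m$, or — which is harmless for the intended application to infinite transitivity — first conjugate the whole configuration by an algebraic automorphism of $M$ that moves the points $p_j$ into such general position, run the argument there, and conjugate back.
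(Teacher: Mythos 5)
Your argument is in substance the same as the paper's: the paper also reduces to one--variable approximation on the projections of $B_1,\dots,B_m$ to the $x$-- and $y$--axes, invokes Runge approximation there, and then passes to polynomials (the paper does this in two steps, entire functions followed by Taylor truncation on a large disk $P$, whereas you get polynomials directly because the complement of the finitely many disjoint closed disks is connected; this is only a cosmetic difference). Your explicit identification of the coefficients $a=v_0^y/(2z_m)$, $b=v_0^x/(2z_m)$ and the remark that the third equation holds automatically by tangency is exactly the content of the paper's one--line observation that $\Xi$ and $\Theta$ span the tangent space on $M\setminus S$.

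The issue you raise at the end is not a flaw of your write-up but a genuine gap in the lemma as stated, which the paper's proof passes over: to define the holomorphic data ($0$ near the projections of $B_1,\dots,B_{m-1}$, the constants $a$, resp.\ $b$, near the projection of $B_m$) one needs the $m$-th projected disk to be disjoint from the others, i.e.\ $x_m\neq x_j$ and $y_m\neq y_j$ for $j<m$, and no smallness of $\varepsilon$ can arrange this when the coordinates coincide exactly. Your counterexample is correct: for $p_j=(x,y,z)$ and $p_m=(x,y,-z)$ in $M\setminus S$, smallness of $f(x)\Xi+g(y)\Theta$ on $B_j$ forces $|f|,|g|$ small near $x$ and $y$, hence $V$ is also small on $B_m$, so no $v_0\neq 0$ can be approximated; thus the statement needs the separation hypothesis (or pairwise distinct $x$- and $y$-coordinates) as you propose. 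Your suggested repair — putting the configuration in general position by a preliminary automorphism generated by the flows of $\Theta$ and $\Xi$ — is the natural one and is consistent with how the proof of Theorem \ref{thmalgsingapprox} already pre-composes with a generic $G$ to move the points off $S$; note that in the proof of Proposition \ref{propmainsing} the same genericity must then also be imposed on the path $\gamma$ and on the centers of the covering balls, since each of them plays the role of $p_m$ in an application of the lemma.
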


\begin{proof}
For small enough $\varepsilon > 0$, the projection of the union of $B_1, \dots, B_m \subset \CC^3$ to the $x$-axis and to the $y$-axis is Runge. By the Runge approximation theorem, we find holomorphic functions $\widetilde f, \widetilde g \colon \CC \to \CC$ such that $\widetilde f(x) \Xi + \widetilde g(y) \Theta$ satisfies the desired approximation with an estimate of $\delta/2$ instead of $\delta$. For point (2) observe that $\Xi$ and $\Theta$ are spanning the tangent space in $M \setminus S$. Finally, Taylor expand $\widetilde f$ and $\widetilde g$ inside a large enough disk $P \subset \CC$ that contains the projections of $B_1, \dots, B_m$, such that for their respective Taylor polynomials $f$ and $g$ we have that $\| f - \widetilde f \|_P < \delta/2$ and $\| g - \widetilde g \|_P < \delta/2$. Then $f$ and $g$ are the desired polynomials.
\end{proof}

\begin{proposition}
\label{propmainsing}
Let $p_1 = q_1, p_2 = q_2, \dots, p_{m-1} = q_{m-1}$ and $p_m \neq q_m$ be pairwise different points in $M \setminus S$. Then there exists an algebraic automorphism $F \colon M \to M$ that is a finite composition of time-$1$ maps of locally nilpotent derivations of the form $f(x) \Xi$ and $g(y) \Theta$ and such that $F(p_1) = q_1 = p_1, \dots, F(p_{m-1}) = q_{m-1} = p_{m-1}, F(p_m) = q_m \neq p_m$.
\end{proposition}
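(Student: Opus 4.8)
The plan is to move the points one at a time, from $p_m$ toward $q_m$, along a path that avoids $S$ and the other marked points, and to realize each small step by a flow of a vector field of the prescribed form $f(x)\Xi + g(y)\Theta$ supplied by Lemma \ref{lemsingapprox}. First I would choose a smooth embedded path $\gamma \colon [0,1] \to M \setminus S$ from $p_m = \gamma(0)$ to $q_m = \gamma(1)$ that misses the finitely many fixed points $p_1, \dots, p_{m-1}$; such a path exists because $M \setminus S$ is a connected complex manifold of dimension $2$ (it is the union of the two copies of $\CC^*\times\CC$-type charts glued along $\{z\neq 0\}$, hence connected), and removing finitely many points does not disconnect a connected manifold of real dimension $\geq 3$. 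Cover $\gamma([0,1])$ by finitely many balls $B \subset \CC^3$ of a common small radius $\varepsilon$ centered at points $\gamma(t_0) = p_m, \gamma(t_1), \dots, \gamma(t_N) = q_m$ along the path, chosen fine enough that $\gamma(t_{i})$ and $\gamma(t_{i+1})$ both lie in a common such ball and that all these balls stay within a fixed compact subset of $M\setminus S$ and keep a definite distance from the other $p_j$'s.

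Next, for each step $i$ I would use Lemma \ref{lemsingapprox}, applied with the fixed points $p_1,\dots,p_{m-1}$ together with $\gamma(t_i)$ playing the roles of $p_1,\dots,p_m$ there, to produce a vector field $V_i = f_i(x)\Xi + g_i(y)\Theta$ whose flow time-$1$ map $\Psi_i := \exp(V_i)$ is close to the identity near the $p_j$'s and pushes $\gamma(t_i)$ approximately to $\gamma(t_{i+1})$. Here the point is that $\Xi$ and $\Theta$ span the tangent space on $M\setminus S$, so the target tangent vector $v_0$ needed to move $\gamma(t_i)$ toward $\gamma(t_{i+1})$ is attainable; since consecutive path points lie in a common ball, a single time-$1$ flow suffices to cover one step if $\varepsilon$ is small (alternatively one subdivides further). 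Composing, $F := \Psi_{N-1}\circ\cdots\circ\Psi_0$ is a finite composition of time-$1$ maps of locally nilpotent derivations of the form $f(x)\Xi$ and $g(y)\Theta$ — note $f_i(x)\Xi$ and $g_i(y)\Theta$ each generate unipotent one-parameter subgroups and their time-$1$ maps compose to give the time-$1$ map of $V_i$ only up to the usual Trotter/ordering issue, so it is cleaner to simply take $F$ to be the composition of the individual time-$1$ maps $\exp(f_i(x)\Xi)$ and $\exp(g_i(y)\Theta)$ and absorb the ordering into the approximation.

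The remaining issue is that the $\Psi_i$ are only \emph{approximate}: after composing $N$ of them the accumulated error could spoil both $F(p_j) = p_j$ for $j<m$ and $F(p_m) = q_m$ exactly. I would fix the $j<m$ constraints by working in the subgroup of automorphisms that is built from these flows and invoking a standard transversality/implicit-function argument: the map sending a tuple of flow-times (or of coefficient vectors for the $f_i, g_i$, taken in a fixed finite-dimensional family) to the resulting $m$-tuple of image points is algebraic, its differential at the approximating parameters is surjective onto $T_{p_1}M \oplus \cdots \oplus T_{p_m}M$ because $\Xi,\Theta$ span at each $p_j$ and we have enough independent parameters localized near each $p_j$ (again via Lemma \ref{lemsingapprox}, realizing independent small moves at each point), so by the implicit function theorem we can correct the parameters to hit exactly $(p_1,\dots,p_{m-1},q_m)$. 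This is precisely the place where, as the introduction warns, an analytic tool (the implicit function theorem) is used to obtain the algebraic conclusion. The main obstacle is therefore this last exact-interpolation step: one must set up the finite-dimensional parameter space and verify surjectivity of the differential carefully, ensuring the correcting automorphisms are still finite compositions of time-$1$ maps of vector fields of the allowed form $f(x)\Xi$ and $g(y)\Theta$; everything before that is routine once the connectedness of $M\setminus S$ and the covering of the path are in place.
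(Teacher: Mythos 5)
Your proposal is correct and follows essentially the same route as the paper's proof: a path in $M \setminus S$ covered by finitely many small balls, approximate moves realized by fields of the form $f(x)\Xi + g(y)\Theta$ supplied by Lemma \ref{lemsingapprox}, and an implicit function theorem correction using parameters localized at each marked point, iterated along the path. The only minor difference is that the paper turns the (generally non-complete) field $f(x)\Xi + g(y)\Theta$ into genuine automorphisms via Proposition \ref{propflowapprox} (compositions of flows of $\Xi$, $\Theta$, $x\Xi$), whereas you compose the time-$1$ maps of $f_i(x)\Xi$ and $g_i(y)\Theta$ directly and absorb the Trotter/ordering error into the approximation, which amounts to the same thing.
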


\begin{proof}
Since $M \setminus S$ is connected, we can choose a path $\gamma$ in $M \setminus S$ from $p_m$ to $q_m$ that avoids $p_1, p_2, \dots, p_{m-1}$. Choose $\varepsilon_0 > 0$ s.t. $\min\limits_{j=1, \dots, m-1} \inf\limits_t \| \gamma(t) - p_j \| > \varepsilon_0$ and $\min\limits_{j \neq k} \| p_j - p_k \| > 2 \varepsilon_0$.
Let $B_1, \dots, B_m$ be closed balls in $\CC^3$ of radius $\varepsilon \in (0, \varepsilon_0)$ around $p_1, \dots, p_m$, respectively. Set $K = \bigcup_{j=1}^m B_j$.

We now proceed with the same geometric idea as in the proof of \cite{Andrist2019}*{Lemma 7}, but without making use of the (volume) density property, but instead rely on Lemma \ref{lemsingapprox} above.

For each point $j \in \{1, \dots, m\}$ let $V^{j,x}$ and $V^{j,y}$ be the holomorphic vector fields defined on $K$ which vanish on $B_1, \dots, B_{j-1}, B_{j+1}, \dots, B_{m}$ and agree with the partial derivative $\displaystyle \frac{\partial}{\partial x}$ and $\displaystyle \frac{\partial}{\partial y}$, respectively, on $B_j$. 
For small enough time, the flows of $V^{j,x}$ and $V^{j,y}$ exist on $K$ and can be approximated arbitrarily well by finite compositions $F^{j,x}_t$ and $F^{j,y}_t$, respectively, of flows of $\Xi, \Theta, x \Xi$: Since Lemma \ref{lem-singular-algebraic} established that the Lie algebra generated by $\Xi, \Theta, x\Xi$ contains all the vector fields $f(x) \Xi + g(y) \Theta$ with polynomials $f$ and $g$, we can apply now Proposition \ref{propflowapprox} on the complex manifold $M \setminus S$ to the vector fields produced by the application of Lemma \ref{lemsingapprox} to each of the vector fields $V^{j,x}$ and $V^{j,y}$.

Consider the map $\Phi \colon (\CC^2)^{m} \to (M \setminus S)^{m}$ given by
\[
\begin{pmatrix}
 (t_{1,x}, t_{1,y}) \\
 \vdots \\
 (t_{m,x}, t_{m,y})
\end{pmatrix}
  \mapsto
\begin{pmatrix}
 F^{m,y}_{t_{m,y}} \circ F^{m,x}_{t_{m,x}} \circ \dots \circ F^{1,y}_{t_{1,y}} \circ F^{1,x}_{t_{1,x}}(p_1) \\
 \vdots \\
 F^{m,y}_{t_{m,y}} \circ F^{m,x}_{t_{m,x}} \circ \dots \circ F^{1,y}_{t_{1,y}} \circ F^{1,x}_{t_{1,x}}(p_{m})
\end{pmatrix}
\]
For a sufficiently close approximation, this map is submersive in $0 \in (\CC^2)^{m}$. Now by the implicit function theorem there exists a neighborhood $U_1 \times \dots \times U_{m}$ of $(p_1, \dots, p_{m}) \in (M \setminus S)^{m}$ and a neighborhood $V_1 \times \dots \times V_{m}$ of $(0, \dots, 0) \in (\CC^2)^{m}$ such that $\Phi \colon V_1 \times \dots \times V_{m} \to U_1 \times \dots \times U_{m}$ is a surjective (in fact, bijective) holomorphic map. In particular, for each $r \in U_{m} =: U$ we find $(t_{1,x}, t_{1,y}) \in V_1, \dots, (t_{m,x}, t_{m,y}) \in V_{m}$ s.t.\
\[
\begin{pmatrix}
 F^{m,y}_{t_{m,y}} \circ F^{m,x}_{t_{m,x}} \circ \dots \circ F^{1,y}_{t_{1,y}} \circ F^{1,x}_{t_{1,x}}(p_1) \\
 \vdots \\
 F^{m,y}_{t_{m,y}} \circ F^{m,x}_{t_{m,x}} \circ \dots \circ F^{1,y}_{t_{1,y}} \circ F^{1,x}_{t_{1,x}}(p_{m-1}) \\
 F^{m,y}_{t_{m,y}} \circ F^{m,x}_{t_{m,x}} \circ \dots \circ F^{1,y}_{t_{1,y}} \circ F^{1,x}_{t_{1,x}}(r) \\ 
\end{pmatrix}
=
\begin{pmatrix}
p_1 \\
 \vdots \\
p_{m-1} \\
y
\end{pmatrix}
\]
Note that the choice of the times depends holomorphically on $y$ without any further control, but the map $F := F^{m,y}_{t_{m,y}}  \circ F^{m,x}_{t_{m,x}} \circ \dots \circ F^{1,y}_{t_{1,y}} \circ F^{1,x}_{t_{1,x}} \colon M \setminus S \to M \setminus S$ is a finite composition of algebraic automorphisms.

For each point $r$ on the trace of the path $\gamma$ we can apply the above procedure. Since this trace is compact, we can cover it by finitely many balls of some radius $\varepsilon \in (0, \varepsilon_0)$ and small enough to apply the implicit function theorem as above. Composing these maps, we finally obtain an algebraic automorphism that moves $p_m$ to $q_m$, but keeps the other points $p_1, \dots, p_{m-1}$ fixed.
\end{proof}

\begin{theorem}
\label{thmalgsingapprox}
The subgroup of the algebraic automorphisms generated by the flows of the locally nilpotent derivations $\Xi$, $\Theta$ and $x \Xi$ acts infinitely transitively on the regular locus of $\{ (x,y,z) \in \CC^3 \,:\, x y = z^2 \}$.
\end{theorem}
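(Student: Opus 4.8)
The plan is to derive the theorem from Proposition \ref{propmainsing} in two stages: first upgrade its ``move one point, keep the others fixed'' conclusion to genuine $m$-transitivity on $M \setminus S$, and then bridge the gap between $M \setminus S$ and the full regular locus $M_{\mathrm{reg}} = M \setminus \{0\}$ by showing that any finite configuration of points of $M_{\mathrm{reg}}$ can be pushed entirely off $S$ by a single element of the group. Throughout, let $G \subseteq \aut(M)$ be the subgroup generated by the flows of $\Xi$, $\Theta$, $x\Xi$; since any automorphism of $M$ preserves the singular point, $G$ acts on $M_{\mathrm{reg}}$, and by the proof of Proposition \ref{propmainsing} (where the automorphism $F$ is exhibited as a finite composition of flows of $\Xi, \Theta, x\Xi$) every automorphism produced there lies in $G$.

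First I would establish $m$-transitivity of $G$ on $M \setminus S$ for every $m$. Given two $m$-tuples $(a_1, \dots, a_m)$ and $(b_1, \dots, b_m)$ of pairwise distinct points of $M \setminus S$, I use that $M \setminus S$ is an infinite (irreducible, $2$-dimensional) variety to pick pairwise distinct auxiliary points $c_1, \dots, c_m \in M \setminus S$ disjoint from all the $a_i$ and all the $b_j$. Applying Proposition \ref{propmainsing} $m$ times — reindexing so that the point to be moved occupies the last slot each time — I move $a_1 \mapsto c_1$ keeping $a_2, \dots, a_m$ fixed, then $a_2 \mapsto c_2$ keeping $c_1, a_3, \dots, a_m$ fixed, and so on; at each stage the $m$ points involved together with the target are pairwise distinct and lie in $M \setminus S$, so the proposition applies. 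This produces $F_a \in G$ with $F_a(a_i) = c_i$ for all $i$, and symmetrically $F_b \in G$ with $F_b(b_i) = c_i$; then $F_b^{-1} F_a \in G$ maps $(a_i)$ to $(b_i)$.

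Next I would handle the reduction to $M \setminus S$. Here $S \cap M$ is the union of the two lines $\{x = z = 0\}$ and $\{y = z = 0\}$, both passing through the origin. An explicit integration gives the flows $\phi^{\Theta}_t(x,y,z) = (x + 2tz + t^2 y,\, y,\, z + ty)$ and $\phi^{\Xi}_s(x,y,z) = (x,\, y + 2sz + s^2 x,\, z + sx)$. For a fixed $r = (x_0,y_0,z_0) \in M_{\mathrm{reg}}$ consider the algebraic map $\psi_r \colon \CC^2 \to M$, $(t,s) \mapsto \phi^{\Xi}_s(\phi^{\Theta}_t(r))$; I claim $\psi_r^{-1}(S)$ is a proper Zariski-closed subset of $\CC^2$. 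Indeed: if $r \notin S$ then $\psi_r(0,0) = r \notin S$; if $r$ lies on $\{x = z = 0\}$, so $y_0 \neq 0$, then $\phi^{\Theta}_t(r) = (t^2 y_0,\, y_0,\, t y_0) \notin S$ for $t \neq 0$, hence $\psi_r(t,0) \notin S$ for $t\neq 0$; and if $r$ lies on $\{y = z = 0\}$ the symmetric computation with $\phi^{\Xi}_s$ applies. Consequently, for pairwise distinct $r_1, \dots, r_m \in M_{\mathrm{reg}}$, the finite union $\bigcup_i \psi_{r_i}^{-1}(S)$ is a proper Zariski-closed subset of the irreducible $\CC^2$, so there exists $(t,s)$ outside it; the element $g := \phi^{\Xi}_s \circ \phi^{\Theta}_t \in G$ then satisfies $g(r_i) \in M \setminus S$ for all $i$.

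Combining the two stages: given pairwise distinct $(p_i)$ and $(q_i)$ in $M_{\mathrm{reg}}$, I pick $g_1, g_2 \in G$ with $g_1(p_i), g_2(q_i) \in M \setminus S$, then apply $m$-transitivity on $M \setminus S$ to obtain $h \in G$ with $h\,g_1(p_i) = g_2(q_i)$; the element $g_2^{-1} h\, g_1 \in G$ maps $(p_i)$ to $(q_i)$. Since $m$ is arbitrary, $G$ acts infinitely transitively on $M_{\mathrm{reg}}$. The substantive analytic work is already contained in Proposition \ref{propmainsing}; the only genuinely new ingredient — and the point I would check most carefully — is the passage from $M \setminus S$ to $M_{\mathrm{reg}}$, i.e.\ that one generic composition $\phi^{\Xi}_s \circ \phi^{\Theta}_t$ sweeps all of the finitely many prescribed points off \emph{both} exceptional lines at once. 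This is exactly where both $\Theta$ and $\Xi$ are needed, since each of them fixes one of the two lines pointwise.
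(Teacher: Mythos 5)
Your proof is correct and follows essentially the same route as the paper: push all the prescribed points off $S$ by a generic composition of the flows of $\Theta$ and $\Xi$, then apply Proposition \ref{propmainsing} repeatedly on $M \setminus S$, and conjugate back. The only differences are that you spell out two details the paper leaves implicit --- the explicit Zariski-genericity argument showing one composition $\phi^{\Xi}_s \circ \phi^{\Theta}_t$ sweeps all points off both lines of $S$, and the auxiliary tuple $c_1, \dots, c_m$ that avoids collisions such as $q_1 = p_2$ in the inductive application of Proposition \ref{propmainsing} --- which are welcome refinements rather than a different approach.
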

\begin{proof}
Let $0, p_1, \dots, p_m \in M$ be pairwise distinct points and let $0, q_1, \dots, q_m \in M$ be another pairwise distinct points. Our goal is to find a map $F \colon M \to M$ such that $F(p_1) = q_1, \dots, F(p_m) = q_m$. 
After an algebraic change of coordinates $G \colon M \to M$, which we obtain by composing the flows for $\Theta$ and for $\Xi$ any generic choice of times, we may assume that $p_1, \dots, p_m, q_1 \dots, q_m \in M \setminus S$. The result now follows from an inductive application of Proposition \ref{propmainsing}, moving each point separately, while keeping the others fixed. Finally, we conjugate the obtained map by $G$.
\end{proof}

\begin{remark}
The proofs of Proposition \ref{propmainsing} and hence of Theorem \ref{thmalgsingapprox} work in general, provided one can prove an analog of Lemma \ref{lemsingapprox} which essentially requires the existence of some ``nice'' projections to subvarieties that are contained in the kernels of the locally nilpotent derivations that span the tangent spaces.
\end{remark}

\section*{Acknowledgements}

The author would like to Franc Forstneri\v{c} and Frank Kutzschebauch for helpful comments on this paper.

\begin{bibdiv}
\begin{biblist}

\bib{MR4237295}{book}{
   author={Alarc\'{o}n, Antonio},
   author={Forstneri\v{c}, Franc},
   author={L\'{o}pez, Francisco J.},
   title={Minimal surfaces from a complex analytic viewpoint},
   series={Springer Monographs in Mathematics},
   publisher={Springer, Cham},
   date={2021},
   pages={xiii+430},
   isbn={978-3-030-69056-4},
   isbn={978-3-030-69055-7},
   review={\MR{4237295}},
   doi={10.1007/978-3-030-69056-4},
}

\bib{MR1185588}{article}{
   author={Anders\'{e}n, Erik},
   author={Lempert, L\'{a}szl\'{o}},
   title={On the group of holomorphic automorphisms of ${\bf C}^n$},
   journal={Invent. Math.},
   volume={110},
   date={1992},
   number={2},
   pages={371--388},
   issn={0020-9910},
   review={\MR{1185588}},
   doi={10.1007/BF01231337},
}

\bib{MR3833804}{article}{
   author={Andrist, Rafael B.},
   title={The density property for Gizatullin surfaces with reduced
   degenerate fibre},
   journal={J. Geom. Anal.},
   volume={28},
   date={2018},
   number={3},
   pages={2522--2538},
   issn={1050-6926},
   review={\MR{3833804}},
   doi={10.1007/s12220-017-9916-y},
}

\bib{MR3717940}{article}{
   author={Andrist, Rafael B.},
   author={Kutzschebauch, Frank},
   author={Poloni, Pierre-Marie},
   title={The density property for Gizatullin surfaces completed by four
   rational curves},
   journal={Proc. Amer. Math. Soc.},
   volume={145},
   date={2017},
   number={12},
   pages={5097--5108},
   issn={0002-9939},
   review={\MR{3717940}},
   doi={10.1090/proc/13665},
}

\bib{MR4305975}{article}{
   author={Andrist, Rafael B.},
   title={The density property for Calogero-Moser spaces},
   journal={Proc. Amer. Math. Soc.},
   volume={149},
   date={2021},
   number={10},
   pages={4207--4218},
   issn={0002-9939},
   review={\MR{4305975}},
   doi={10.1090/proc/15457},
}

\bib{Andrist2019}{article}{
   author={Andrist, Rafael B.},
   title={Integrable generators of Lie algebras of vector fields on
   $\mathbb{C}^{n}$},
   journal={Forum Math.},
   volume={31},
   date={2019},
   number={4},
   pages={943--949},
   issn={0933-7741},
   review={\MR{3975669}},
   doi={10.1515/forum-2018-0204},
}

\bib{AFKKZ}{article}{
   author={Arzhantsev, I.},
   author={Flenner, H.},
   author={Kaliman, S.},
   author={Kutzschebauch, F.},
   author={Zaidenberg, M.},
   title={Flexible varieties and automorphism groups},
   journal={Duke Math. J.},
   volume={162},
   date={2013},
   number={4},
   pages={767--823},
   issn={0012-7094},
   review={\MR{3039680}},
   doi={10.1215/00127094-2080132},
}

\bib{AKZ2019}{article}{
   author={Arzhantsev, I.},
   author={Kuyumzhiyan, K.},
   author={Zaidenberg, M.},
   title={Infinite transitivity, finite generation, and Demazure roots},
   journal={Adv. Math.},
   volume={351},
   date={2019},
   pages={1--32},
   issn={0001-8708},
   review={\MR{3949984}},
   doi={10.1016/j.aim.2019.05.006},
}

\bib{MR2718937}{article}{
   author={Donzelli, F.},
   author={Dvorsky, A.},
   author={Kaliman, S.},
   title={Algebraic density property of homogeneous spaces},
   journal={Transform. Groups},
   volume={15},
   date={2010},
   number={3},
   pages={551--576},
   issn={1083-4362},
   review={\MR{2718937}},
   doi={10.1007/s00031-010-9091-8},
}

\bib{MR1213106}{article}{
   author={Forstneri\v{c}, Franc},
   author={Rosay, Jean-Pierre},
   title={Approximation of biholomorphic mappings by automorphisms of ${\bf
   C}^n$},
   journal={Invent. Math.},
   volume={112},
   date={1993},
   number={2},
   pages={323--349},
   issn={0020-9910},
   review={\MR{1213106}},
   doi={10.1007/BF01232438},
}

\bib{MR1296357}{article}{
   author={Forstneri\v{c}, Franc},
   author={Rosay, Jean-Pierre},
   title={Erratum: ``Approximation of biholomorphic mappings by
   automorphisms of $\mathbf{C}^n$'' [Invent. Math. {\bf 112} (1993), no. 2,
   323--349; MR1213106 (94f:32032)]},
   journal={Invent. Math.},
   volume={118},
   date={1994},
   number={3},
   pages={573--574},
   issn={0020-9910},
   review={\MR{1296357}},
   doi={10.1007/BF01231544},
}

\bib{Forstneric-book}{book}{
   author={Forstneri\v{c}, Franc},
   title={Stein manifolds and holomorphic mappings},
   series={Ergebnisse der Mathematik und ihrer Grenzgebiete. 3. Folge. A
   Series of Modern Surveys in Mathematics [Results in Mathematics and
   Related Areas. 3rd Series. A Series of Modern Surveys in Mathematics]},
   volume={56},
   edition={2},
   note={The homotopy principle in complex analysis},
   publisher={Springer, Cham},
   date={2017},
   pages={xiv+562},
   isbn={978-3-319-61057-3},
   isbn={978-3-319-61058-0},
   review={\MR{3700709}},
   doi={10.1007/978-3-319-61058-0},
}

\bib{MR4440754}{article}{
   author={Forstneri\v{c}, F.},
   author={Kutzschebauch, F.},
   title={The first thirty years of Anders\'{e}n-Lempert theory},
   journal={Anal. Math.},
   volume={48},
   date={2022},
   number={2},
   pages={489--544},
   issn={0133-3852},
   review={\MR{4440754}},
   doi={10.1007/s10476-022-0130-1},
}

\bib{MR2385667}{article}{
   author={Kaliman, Shulim},
   author={Kutzschebauch, Frank},
   title={Criteria for the density property of complex manifolds},
   journal={Invent. Math.},
   volume={172},
   date={2008},
   number={1},
   pages={71--87},
   issn={0020-9910},
   review={\MR{2385667}},
   doi={10.1007/s00222-007-0094-6},
}

\bib{MR2350038}{article}{
   author={Kaliman, Shulim},
   author={Kutzschebauch, Frank},
   title={Density property for hypersurfaces $UV=P(\overline X)$},
   journal={Math. Z.},
   volume={258},
   date={2008},
   number={1},
   pages={115--131},
   issn={0025-5874},
   review={\MR{2350038}},
   doi={10.1007/s00209-007-0162-z},
}

\bib{MR2660454}{article}{
   author={Kaliman, Shulim},
   author={Kutzschebauch, Frank},
   title={Algebraic volume density property of affine algebraic manifolds},
   journal={Invent. Math.},
   volume={181},
   date={2010},
   number={3},
   pages={605--647},
   issn={0020-9910},
   review={\MR{2660454}},
   doi={10.1007/s00222-010-0255-x},
}

\bib{MR3623226}{article}{
   author={Kaliman, Shulim},
   author={Kutzschebauch, Frank},
   title={Algebraic (volume) density property for affine homogeneous spaces},
   journal={Math. Ann.},
   volume={367},
   date={2017},
   number={3-4},
   pages={1311--1332},
   issn={0025-5831},
   review={\MR{3623226}},
   doi={10.1007/s00208-016-1451-9},
}

\bib{MR1669174}{article}{
   author={Kaliman, Sh.},
   author={Zaidenberg, M.},
   title={Affine modifications and affine hypersurfaces with a very
   transitive automorphism group},
   journal={Transform. Groups},
   volume={4},
   date={1999},
   number={1},
   pages={53--95},
   issn={1083-4362},
   review={\MR{1669174}},
   doi={10.1007/BF01236662},
}

\bib{MR3320241}{article}{
   author={Kutzschebauch, Frank},
   author={Leuenberger, Matthias},
   author={Liendo, Alvaro},
   title={The algebraic density property for affine toric varieties},
   journal={J. Pure Appl. Algebra},
   volume={219},
   date={2015},
   number={8},
   pages={3685--3700},
   issn={0022-4049},
   review={\MR{3320241}},
   doi={10.1016/j.jpaa.2014.12.017},
}

\bib{MR2823038}{article}{
   author={Kutzschebauch, Frank},
   author={Lind, Andreas},
   title={Holomorphic automorphisms of Danielewski surfaces I---density of
   the group of overshears},
   journal={Proc. Amer. Math. Soc.},
   volume={139},
   date={2011},
   number={11},
   pages={3915--3927},
   issn={0002-9939},
   review={\MR{2823038}},
   doi={10.1090/S0002-9939-2011-10855-4},
}

\bib{MR3513546}{article}{
   author={Leuenberger, Matthias},
   title={(Volume) density property of a family of complex manifolds
   including the Koras-Russell cubic threefold},
   journal={Proc. Amer. Math. Soc.},
   volume={144},
   date={2016},
   number={9},
   pages={3887--3902},
   issn={0002-9939},
   review={\MR{3513546}},
   doi={10.1090/proc/13030},
}

\bib{MR1829353}{article}{
   author={Varolin, Dror},
   title={The density property for complex manifolds and geometric
   structures},
   journal={J. Geom. Anal.},
   volume={11},
   date={2001},
   number={1},
   pages={135--160},
   issn={1050-6926},
   review={\MR{1829353}},
   doi={10.1007/BF02921959},
}

\bib{MR1785520}{article}{
   author={Varolin, Dror},
   title={The density property for complex manifolds and geometric
   structures. II},
   journal={Internat. J. Math.},
   volume={11},
   date={2000},
   number={6},
   pages={837--847},
   issn={0129-167X},
   review={\MR{1785520}},
   doi={10.1142/S0129167X00000404},
}

%\bib{MR1721579}{article}{
%   author={Varolin, Dror},
%   title={A general notion of shears, and applications},
%   journal={Michigan Math. J.},
%   volume={46},
%   date={1999},
%   number={3},
%   pages={533--553},
%   issn={0026-2285},
%   review={\MR{1721579}},
%   doi={10.1307/mmj/1030132478},
%}
%		
%
%\bib{MR2696024}{book}{
%   author={Varolin, Dror},
%   title={The density property},
%   note={Thesis (Ph.D.)--The University of Wisconsin - Madison},
%   publisher={ProQuest LLC, Ann Arbor, MI},
%   date={1997},
%   pages={91},
%   isbn={978-0591-45993-7},
%   review={\MR{2696024}},
%}

\end{biblist}
\end{bibdiv}

\end{document}